\newcommand{\Z}{{\mathbb Z}}
\newcommand{\R}{{\mathbb R}}
\newtheorem{thm}{Theorem}[section]
\newtheorem{cor}[thm]{Corollary}
\newtheorem{prop}[thm]{Proposition}
\newtheorem{rem}[thm]{Remark}
\newtheorem{lem}[thm]{Lemma}
\begin{document}
\title{Iteration of  polynomial pair under Thue-Morse dynamic }

\author{Qinghui LIU}
\address[Q.H. LIU]{
Dept. Computer Sci.,
Beijing Institute of Technology,
Beijing 100081, PR China.}
\email{qhliu@bit.edu.cn}
\author{Yanhui QU}
\address[Y.H. QU]{Dept. Math., Tsinghua University, Beijing 100084, PR China.}
\email{yhqu@math.tsinghua.edu.cn}

\begin{abstract}
We study the behavior of a polynomial sequence which is defined by iterating a polynomial pair under Thue-Morse dynamic. We show that in suitable sense, the sequence will behave like $\{2\cos 2^nx: n\ge 1\}$. Basing on  this property
we can show that  the Hausdorff dimension of the spectrum  of the Thue-Morse Hamiltonian  has a common positive lower bound for all coupling.
\end{abstract}

\maketitle

\section{Introduction}

The trace polynomials related to Thue-Morse sequence has been studied since 1980s.  See especially the early works \cite{AP1,AP,B}. Let us recall the definitions.
Consider the Thue-Morse substitution
$$
\begin{cases}
\sigma(a)=ab\\
\sigma(b)=ba.
\end{cases}
$$
We denote the free group generated by $a,b$ as ${\rm FG}(a,b).$ Given $\lambda, x\in \R,$ we can define a homomorphism $\tau:{\rm
FG}(a,b)\to {\rm SL}(2,\R)$ as
$$
\tau(a)= \left[
\begin{array}{cc}
x-\lambda&-1\\
1&0
\end{array}
\right]\ \ \ \text{ and }\ \ \ \tau(b)= \left[
\begin{array}{cc}
x+\lambda&-1\\
1&0
\end{array}
\right]
$$
and $\tau(a_1\cdots a_n)=\tau(a_n)\cdots\tau(a_1).$   Define
$
h_n(x):={\rm tr}(\tau(\sigma^n(a)))
$
(where ${\rm tr}(A)$ denotes the trace of the matrix $A$),
by a direct computation we have
\begin{eqnarray}\label{recurrence}
\nonumber h_1(x)&=&x^2-\lambda^2-2;\ \  h_2(x)=(x^2-\lambda^2)^2-4x^2+2;\\
h_{n+1}(x)&=&h_{n-1}^2(x)(h_n(x)-2)+2 \ \  (n\ge2).
\end{eqnarray}
$\{h_n: n\ge 1\}$ is called the sequence of {\it trace polynomials} related to Thue-Morse sequence.

\eqref{recurrence} motivates the following definition.  Define $\Phi: \R^2\to\R^2$ as
$$
\Phi(x,y)=\left(y^2(x-2)+2, x\right).
$$
Then the recurrence relation \eqref{recurrence} is equivalent to $(h_{n+1},h_n)=\Phi(h_n,h_{n-1})$. We thus call  $\Phi$  the {\it Thue-Morse dynamic}.

In general, starting from a polynomial pair $(P_{-1},P_0)$, we can define
$$
(P_{n},P_{n-1}):=\Phi^n(P_0,P_{-1}).
$$
The main goal of this paper is to understand the behavior of the polynomial
sequence $\{P_n:n\ge -1\}$. Let us start with  one simple situation.

\subsection{A special sequence}\

 We take $\lambda=0$ and consider the sequence $\{h_n: n\ge1\}$.
   In this case $h_1(x)=x^2-2$ and
 $$
 h_2(x)=x^4-4x^2+2=h_1^2(x)-2=h_1\circ h_1(x).
 $$
By induction it is easy to show that
$$
h_n(x)=\underbrace{h_1\circ\cdots\circ h_1}_n(x).
$$
Thus in this special case, the iterations of $(h_1,h_2)$ are quite clear.

Observe  that if $\{P_n: n\ge-1\}$ is defined by the Thue-Morse dynamic, then for any change of variable $x=\varphi(y)$, the sequence $\{Q_n=P_n\circ \varphi:n\ge -1\}$ still satisfies the Thue-Morse dynamic.
 Of course, now $Q_n$ need not to be a polynomial.
 If we do the change of variable $x=2\cos y$,  then
by a simple computation we get
$$
g_n(y):=h_n(2\cos y)=2\cos (2^n y).
$$
Thus we conclude that $\{2\cos 2^nx: n\ge 1\}$ satisfies the Thue-Morse dynamic.

\subsection{General pictures}\label{gene-pic}\

A natural question is that what does $\{h_n: n\ge1\}$ look like when $\lambda\ne 0?$ More generally, starting from any polynomial pair $(P_{-1},P_0)$, and defining  $P_n$ according to Thue-Morse dynamic, what is the  behavior of $P_n?$

In this paper we will answer this question partly. Roughly speaking, let  $Z:=\{x: P_k(x)=0 \text{ for some } k\}$, then we will show that under some mild condition, around each $x\in Z,$ after suitable renormalization,  the sequence $\{P_n: n\ge-1\}$ will behave like $\{2\cos 2^nx: n\ge 1\}.$
We will make this precise in Section \ref{sec-2}.

\subsection{Application to  Thue-Morse Hamiltonian}\

This general result can be applied  to the spectral problem of Thue-Morse Hamiltonian and give a uniform lower bound for the Hausdorff dimension of the spectrum.

Let us recall the definition of   discrete Schr\"odinger operator. Given a bounded real sequence $v=\{v(n)\}_{n\in\Z}$ and $\lambda\in\R$, we can  define an operator $H_{\lambda,v}$  act on $l^2(\mathbb{Z})$
as
$$ 
(H_{\lambda,v}\psi)(n)=\psi(n+1)+\psi(n-1)+\lambda v(n)\psi(n),\ \forall n\in\mathbb{Z}.
$$ 
$H_{\lambda,v}$ is called an {\it discrete Schr\"odinger operator with potential $\lambda v$}; $\lambda$ is called the {\it coupling} constant. We denote the spectrum of $H_{\lambda,v}$ by $\sigma(H_{\lambda,v})$.

We define the two-sided  Thue-Morse sequence   $v$ as follows:  Let $\sigma$ be the Thue-Morse sbustitution, let $u=u_1u_2\cdots:=\sigma^\infty(a).$ For $n\ge 1$, let  $v(n)=1$ if $u_n=a$; let $v(n)=-1$
 if $u_n=b;$ let $v(1-n)=v(n)$ for $n\ge 1.$ The operator $H_{\lambda,v}$
 with   Thue-Morse sequence $v$ is called {\it Thue-Morse Hamiltonian}.
   We will prove the following theorem.

\begin{thm}\label{main-bd}
There exists an absolute constant $C>0$  such that for  Thue-Morse sequence $v$  and any $\lambda\in\R$,
 $$
 \dim_H\sigma(H_{\lambda,v}) \ge C.
 $$
 \end{thm}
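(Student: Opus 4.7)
To prove Theorem~\ref{main-bd}, the plan is to combine the standard trace-polynomial description of the spectrum with the renormalization statement of Section~\ref{sec-2}.

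First, I would use the classical link between $\sigma(H_{\lambda,v})$ and the trace polynomials $h_n$: the spectrum is the (appropriately defined) limit of the nested ``band'' sets
$$
B_n := \{x\in\R : |h_n(x)|\le 2\},
$$
and every $n$-band of $B_n$ carries a zero of $h_n$, i.e.\ a point of the zero-set $Z$ appearing in Section~\ref{gene-pic}. This reduces the problem of lower-bounding $\dim_H\sigma(H_{\lambda,v})$ to the construction of a Cantor subset of the band tree with controlled branching.

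Second, I would apply the main result of Section~\ref{sec-2} at a zero $x_0\in Z$ which lies in the spectrum and at which the renormalization hypothesis holds. After the affine change of variable $x=\varphi(y)$ provided by that theorem, the tails $\{h_{k+m}\circ\varphi:m\ge 0\}$ resemble $\{2\cos 2^m y:m\ge 0\}$ up to bounded distortion on a fixed neighborhood of $y=0$. Because the level set $\{|2\cos 2^m y|\le 2\}$ has a completely explicit dyadic splitting, the local band structure of $B_{k+m}$ near $x_0$ inherits a Cantor-like pattern whose branching number and contraction ratio depend only on absolute constants extracted from the cosine model.

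Third, I would extract from this local model a Cantor subset $K\subset\sigma(H_{\lambda,v})$ by a Moran-type construction: at each generation $m$ one retains a fixed number $N\ge 2$ of sub-intervals of length $\asymp c\rho^{m}$ inside each surviving interval. A standard mass-distribution argument then yields
$$
\dim_H\sigma(H_{\lambda,v})\;\ge\;\dim_H K\;\ge\;\frac{\log N}{\log(1/\rho)}\;=:\;C,
$$
with $C$ an absolute constant.

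The main obstacle I expect is uniformity in $\lambda$. For large $|\lambda|$ the bands of $B_n$ shrink and the zeros of $h_n$ can cluster or move toward degenerate configurations, so one must verify that for \emph{every} $\lambda$ there exists a level $k$ and a zero $x_0\in\sigma(H_{\lambda,v})$ of $h_k$ at which the renormalization theorem applies with constants (scale, distortion, non-degeneracy) that do not deteriorate as $|\lambda|\to\infty$. This amounts to a quantitative version of the ``mild condition'' announced in Section~\ref{gene-pic}. Once it is in place, the branching $N$ and contraction ratio $\rho$ are governed by the universal profile of $2\cos y$, and the resulting $C=\log N/\log(1/\rho)$ is automatically coupling-independent, completing the proof.
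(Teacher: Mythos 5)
Your high-level plan -- locate a zero of some $h_n$, show the renormalized tail of the trace sequence looks like $\{2\cos 2^m y\}$ there, build a Cantor set of zeros inside the spectrum, and finish with a Moran-type dimension estimate -- does match the paper's route, including the reliance on the inclusion $\Sigma=\{x:\exists n,\, h_n(x)=0\}\subset\sigma(H_{\lambda,v})$ from \cite{AP,B}. But the proposal leaves precisely the load-bearing step unresolved. You flag ``uniformity in $\lambda$'' as the main obstacle and say it ``amounts to a quantitative version of the mild condition'' -- and then assume it can be put in place. That is the theorem's actual content, not a technicality. The paper handles it by an explicit computation (Lemma~\ref{initial-condition}): at the concrete zero $a_\emptyset=\sqrt{2+\lambda^2}$ of $h_1$, it shows $(h_4,h_5)$ is $(1,1)$-regular with renormalization factor $2^4\rho$ where $\rho=(1+2\lambda^2)\sqrt{(1+\lambda^2)(2+\lambda^2)}$. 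The point is that the regularity parameters $(\delta,\beta)=(1,1)$ come out \emph{independent of $\lambda$}, even though $\rho$ itself grows with $\lambda$ (it only rescales the window, it does not degrade the estimates). Nothing in your sketch indicates how you would obtain this; without it the argument does not start.

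A second, smaller gap: to iterate the Cantor construction you must show that regular germs \emph{reproduce} -- a $(1,1)$-regular germ at $\theta$ must give rise, after finitely many iterations, to new $(1,1)$-regular germs at nearby zeros $\theta^\pm$, with a quantitative lower bound on $|\theta^\pm-\theta|$ relative to the previous scale. This is Proposition~\ref{1-1-regular} and Proposition~\ref{key-lem} in the paper (the renewal happens every $K$ steps, $K$ an absolute integer, and the resulting ratio bound $2.1^{-K}$ is what enters the final Moran estimate $\dim_H\mathcal{C}\ge \ln 2/(K\ln 2.1)$). Your sketch gestures at ``a fixed number $N\ge 2$ of sub-intervals of length $\asymp c\rho^m$'' governed ``by the universal profile of $2\cos y$,'' but the cosine model alone does not give you a self-renewing germ -- you need to show the error $\Delta_k$ contracts fast enough (Proposition~\ref{control-coefficient}) and that the derivative data at the new zero regenerates a clean germ (Proposition~\ref{1-1-regular}). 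These are concrete estimates, not consequences of the cosine picture. So: right skeleton, but the two quantitative pillars (uniform initial regularity at a $\lambda$-dependent zero, and germ renewal with controlled ratios) are both missing.
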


 \begin{rem}
 {\rm
 Axel and Peyri\`ere(\cite{AP1,AP})
study the spectrum, and prove numerically that its Lesbesgue measure is
zero, and the Box dimension is strictly less than $1$.
Then it is rigorously  proven that the spectrum is a Cantor set of Lebesgue measure zero
(see for example \cite{B,BG,Lenz,LTWW}). By our best knowledge, no rigorous results about the Hausdorff dimension of the spectrum has been proven before.
}
\end{rem}

\begin{rem}
{\rm
By our result, the dimension property of Thue-Morse Hamiltonian   is quite
different from another heavily studied model -- the Fibonacci Hamiltonian.
Recall that the Fibonacci sequence $w$ is defined by
$$
w(n)=\chi_{[1-\alpha,1)}(n\alpha \mod 1),\quad \forall n\in\mathbb{Z}
$$
with $\alpha=(\sqrt{5}+1)/2.
$
The Fibonacci Hamiltonian $H_{\lambda,w}$ is a  central model in discrete Schr\"odinger  operator.  Its dimensional properties has been extensively studied, see for example \cite{R,JL,LW,DEGT,C,DG,DG2}. In purticular  the following property is shown in \cite{DEGT}:
$$
\lim_{|\lambda|\to\infty} \dim_H \sigma(H_{\lambda,w})\ln  |\lambda|=\ln (1+\sqrt{2}).
$$
This implies that $\dim_H \sigma(H_{\lambda,w})\to 0$ with the speed $1/\ln |\lambda|$ when $|\lambda|\to\infty.$ However by our result, the dimension 
of spectrum for Thue-Morse potential (i.e., $\sigma(H_{\lambda,v})$) 
has a uniform positive lower bound.
}
\end{rem}

\begin{rem} \label{rem-zero}
{\rm
The rough idea of proving Theorem \ref{main-bd} is the following. Recall that $\{h_n:n\ge 1\}$ is the trace polynomial sequence  related to the Thue-Morse sequence. Define
$$
\Sigma=\{x\in\mathbb{R}: \exists n>0, h_n(x)=0\}.
$$
It is shown in \cite{AP,B} that $\Sigma\subset \sigma(H_{\lambda,v})$. Basing  on the general picture described in Section \ref{gene-pic}, it is possible to construct a Cantor subset $\mathcal C$ of $\sigma(H_{\lambda,v})$ in a controllable fashion, then we can estimate the Hausdorff dimension of $\mathcal C$, which in turn offer a lower bound for the dimension of the spectrum.
}
\end{rem}

The rest of the paper is organized as follows.
In Section \ref{sec-2},  we show that the polynomial sequence will behave like $\{2\cos 2^nx:n\ge 1\}$ near a base point of a germ.
In Section \ref{sec-3}, we prepare the proof of the lower bound of the spectrum.  In Section \ref{sec-4}, we prove Theorem \ref{main-bd}.

\section{Convergence towards $\{2\cos 2^nx: n\ge 1\}$}\label{sec-2}

Given polynomial pair $(f_{-1},f_0)$. We recall that defining  $\{f_n:n\ge 1\}$ according to $(f_n, f_{n-1}):=\Phi^n(f_0,f_{-1})$ for $n\ge 0$ is equivalent to define it according to the recurrence relation
$f_{n+1}=f_{n-1}^2(f_n-2)+2$.

\subsection{Germ of a polynomial pair }\

Given polynomial pair $(f_{-1},f_0)$. Define $f_{n+1}=f_{n-1}^2(f_n-2)+2$ for $n\ge 0.$ Assume $f_0(x_0)=0$, at first we study the local behavior of $f_n$ at $x_0.$
Write
 $$
 f_0(x)=f^\prime(x_0)(x-x_0)+O((x-x_0)^2)\ \text{ and }\ f_1(x)=f_1(x_0)+O((x-x_0)).
 $$
 By the recurrence relation we have
 \begin{eqnarray*}
 f_2(x)&=&2-(2-f_1(x_0))f_0^{\prime2}(x_0)(x-x_0)^2 +O((x-x_0)^3)\\
  f_k(x)&=&2-4^{k-3}(2-f_1(x_0))\left(f_0^{\prime}(x_0)f_1(x_0)\right)^2(x-x_0)^2 \\
  && +O((x-x_0)^3)\ \ \ \ (k\ge 3).
 \end{eqnarray*}
 If moreover $f_1(x_0)< 2$ and $f_0^\prime(x_0), f_1(x_0)\ne 0$, then
 $$
 \rho:= \sqrt{2-f_1(x_0)}|f_0^{\prime}(x_0)f_1(x_0)|>0
 $$ and   for $k\ge 3$
 \begin{equation}\label{fkx}
f_k(x)=2-(2^{k-3}\rho)^2(x-x_0)^2+O((x-x_0)^3).
\end{equation}
If we define $\tilde f_k(x)=f_{k+3}(x/\rho+x_0)$, then for $k\ge 0$
$$
\tilde f_k(x)= 2-(2^{k}x)^2+O(x^3).
$$
Notice that we also have
$$
2\cos 2^kx =2-(2^kx)^2+O(x^3).
$$
Thus $\{\tilde f_k (x):k\ge 1\}$ is a good candidate of polynomial sequence which converge to $\{2\cos 2^kx: k\ge 1\}$. This also motivates the following definition.

Given a polynomial pair  $(P_{-1}, P_0)$. Assume at  $x_0\in\mathbb{R}$,
 there exists $\rho>0$ such that
$$
 \begin{cases}
 P_{-1}(x)&=2-\frac{\rho^2}{4}(x-x_0)^2+O((x-x_0)^3);\\
  P_{0}(x)&=2-\rho^2(x-x_0)^2+O((x-x_0)^3)
 \end{cases}
$$
Then we say $(P_{-1},P_0)$ has a {\it  $\rho$-germ } at $x_0$. $x_0$ is called the {\it base point} of the germ.

Assume $(P_{-1}, P_0)$ has a $\rho$-germ at $x_0$.
For $k\ge1$, define
\begin{equation}\label{iter}
P_k=P_{k-2}^2(P_{k-1}-2)+2.
\end{equation}
For $k\ge -1$ define
\begin{equation}\label{iterscale}
Q_k(x)=P_k(\frac{x}{2^k\rho}+x_0).
\end{equation}
It is ready to show that
 $
 Q_{k}(x)=2-x^2+O(x^3).
 $
Since $2\cos x=2-x^2+O(x^3)$,  we conclude taht   $Q_k(x)=2\cos x +O(x^3).$
Write $\Delta_k(x)= Q_k(x)-2\cos x,$ then
\begin{equation}\label{delta-k}
\Delta_k(x)=Q_k(x)-2\cos x=\sum_{k\ge 3} \Delta_{k,n}x^n.
\end{equation}

\subsection{Regular germ of polynomial pair}\

Our goal is to show that $\Delta_k(x)\to 0$ for $x$ in any bounded interval,  to achieve this we need to impose some condition on the initial pair $(P_{-1},P_0)$, or equivalently on $(Q_{-1},Q_0)$. Our condition is about the coefficients of $\Delta_{-1}$ and $\Delta_0.$ Let us do some preparation.

Given two formal series with real coefficients
$$
f(x)=\sum_{n=0}^\infty a_nx^n \ \ \ \ \text{ and }\ \ \ \ g(x)=\sum_{n=0}^\infty b_nx^n.
$$
If we only concern about their coefficients, we can define the following partial order:
$$
f\preceq g  \Leftrightarrow a_n\le b_n  \ \ (\forall n\ge 0).
$$
We further  define $|f(x)|^\ast:=\sum_{n=0}^\infty |a_n|x^n.$ Then it is easy to check that
$$
|fg|^\ast\preceq |f|^\ast|g|^\ast\ \ \ \text{ and }\ \ |f+g|^\ast\preceq |f|^\ast+|g|^\ast.
$$
Moreover if $|f|^\ast\preceq \tilde f$ and $|g|^\ast\preceq \tilde g$, then it is seen that $|fg|^\ast\preceq \tilde f\tilde g.$
Later we will use these properties repeatedly to estimate the coefficients of certain series.

Let us go back to $\{P_k: k\ge -1\}$ discussed above.
 If moreover there exist $\delta>0$ and $\beta\ge1$ such that
$$
|\Delta_{-1}|^\ast, |\Delta_0|^\ast\preceq \delta\sum_{n=3}^\infty \frac{x^n}{\beta^n}
$$
Then we say that $(P_{-1},P_0)$  has a  {\it  $(\delta,\beta)$-regular $\rho$-germ at $x_0$}.  We also say that $(P_{-1},P_0)$  is  {\it  $(\delta,\beta)$-regular at $x_0$
with renormalization factor $\rho$}, or simply  as {\it  $(\delta,\beta)$-regular at $x_0$}.

With this definition, now we can state our main convergence theorem.

\begin{thm}\label{exp-cvge}
Assume  $(P_{-1},P_0)$ has a $(1,1)$-regular $\rho$-germ at $x_0$. Define
 $(P_k)_{k\ge-1}$
and  $\{Q_k\}_{k\ge-1}$ according to \eqref{iter} and \eqref{iterscale} respectively. Let $\Delta_k(x)=Q_k(x)-2\cos x$.  Then for any $m\ge0$, there exists an absolute constant $C_m>0$ such that for any  $k\ge 2m+1$ and any $x\in[-2^{m-1}\pi,2^{m-1}\pi]$,
$$
|\Delta_k(x)|
\le \tilde C_m \alpha^k|x|^3\le C_m\alpha^k.
$$
\end{thm}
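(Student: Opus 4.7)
The plan is to induct on $m$, using the Thue--Morse recursion to reduce control of $\Delta_k$ on a large interval to control on a halved interval at the cost of losing two levels of $k$. First I would set up the recursion satisfied by $Q_k$: substituting $y=x/(2^k\rho)+x_0$ into $P_{k+1}=P_{k-1}^2(P_k-2)+2$ gives
\[
Q_{k+1}(x)=Q_{k-1}(x/4)^2\bigl(Q_k(x/2)-2\bigr)+2,
\]
and the double-angle identity $4\cos^2(x/4)\bigl(2\cos(x/2)-2\bigr)+2=2\cos x$ shows that $2\cos x$ is a fixed point. Subtracting yields the error equation
\[
\Delta_{k+1}(x)=\bigl(2\cos(x/4)+\Delta_{k-1}(x/4)\bigr)^2\Delta_k(x/2)+\Delta_{k-1}(x/4)\bigl(4\cos(x/4)+\Delta_{k-1}(x/4)\bigr)\bigl(2\cos(x/2)-2\bigr),
\]
which drives the whole argument.

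For the base case $m=0$ (so $|x|\le\pi/2$), I would apply the formal series partial order $|\cdot|^*$ of Section~2.2 to this error equation. Starting from the regularity hypothesis $|\Delta_{-1}|^*,|\Delta_0|^*\preceq\sum_{n\ge 3}x^n$ and using submultiplicativity, together with $|2\cos y|^*=2\cosh y$, $|2\cos y-2|^*=2\cosh y-2$, and the fact that the recursion evaluates the lower $\Delta$'s at $x/2$ and $x/4$ (shrinking their coefficients by $2^{-n}$ and $4^{-n}$, while the extra factor $2\cos(x/2)-2=O(x^2)$ kills the $\Delta_{k-1}$ term to higher order), I would propagate by induction on $k$ a bound $|\Delta_k|^*\preceq\epsilon_k F(x)$, where $F$ is a fixed majorant convergent up to radius at least $\pi/2$ and the scalars $\epsilon_k$ satisfy a linear recurrence $\epsilon_{k+1}\le\lambda_1\epsilon_k+\lambda_2\epsilon_{k-1}$ whose dominant characteristic root $\alpha$ lies in $(0,1)$. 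Because $F$ starts at degree~$3$, pointwise evaluation at $|x|\le\pi/2$ then yields $|\Delta_k(x)|\le\tilde C_0\alpha^k|x|^3$.

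For the inductive step $m-1\Rightarrow m$, fix $k\ge 2m+1$ and $|x|\le 2^{m-1}\pi$, and apply the error equation with $k$ replaced by $k-1$. Both $x/2$ and $x/4$ then lie in $[-2^{m-2}\pi,2^{m-2}\pi]$, while $k-1,k-2\ge 2(m-1)+1$, so the inductive hypothesis supplies
\[
|\Delta_{k-1}(x/2)|\le\tilde C_{m-1}\alpha^{k-1}|x|^3/8,\qquad|\Delta_{k-2}(x/4)|\le\tilde C_{m-1}\alpha^{k-2}|x|^3/64.
\]
Combining with the trivial pointwise bounds $|\cos|\le 1$, $|2\cos(x/2)-2|\le 4$, and the uniform bound $|\Delta_{k-2}(x/4)|\le C_{m-1}$ furnished by the inductive hypothesis, I obtain $|\Delta_k(x)|\le\tilde C_m\alpha^k|x|^3$ with $\tilde C_m$ depending polynomially on $\tilde C_{m-1}$, $C_{m-1}$ and $\alpha^{-1}$; the second inequality $|\Delta_k(x)|\le C_m\alpha^k$ is immediate from $|x|\le 2^{m-1}\pi$.

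The main obstacle is the base case. The delicate point is to pick $F$ so that $(2\cosh(x/4))^2 F(x/2)$ is dominated coefficient-wise by a factor $<1$ times $F(x)$, while $F$ still converges beyond $x=\pi/2$; the initial majorant $\sum_{n\ge 3}x^n$ coming from $\beta=1$ has radius only $1$, so one likely needs one or two preliminary iterations of the recursion to widen the effective radius before the clean linear bound $\epsilon_{k+1}\le\lambda_1\epsilon_k+\lambda_2\epsilon_{k-1}$ stabilizes. Once the base case is in hand, the inductive step in $m$ is essentially mechanical.
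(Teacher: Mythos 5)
Your proposal matches the paper's proof essentially step for step: your error equation is algebraically equivalent to the paper's \eqref{delta-k-x}; your base case via coefficient-wise majorants is exactly what Propositions \ref{model} and \ref{control-coefficient} do (and the ``preliminary iteration'' you anticipate is precisely how the paper widens the effective radius from $1$ to $2$ before the geometric decay $\alpha=2^{-1/2}$ emerges, by halving the $\ell^\infty$ coefficient bound every two steps); and your inductive step in $m$, using pointwise bounds $|\cos|\le 1$ together with the hypothesis at $x/2,x/4\in[-2^{m-2}\pi,2^{m-2}\pi]$ and $k-1,k-2\ge 2(m-1)+1$, is the paper's argument verbatim.
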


\begin{rem}
{\rm
This theorem says that in any bounded interval, the polynomial sequence $Q_k(x)$ 
will converge to $2\cos x$ uniformly with exponential speed.
If we define $\tilde P_k(x):=Q_k(2^kx)$, then  the sequence $\{\tilde P_k(x):k\ge 1\}$ will behave like $\{2\cos 2^kx: k\ge 1\}$ locally. On the other hand, notice that by \eqref{iterscale}, we have 
$$
\tilde P_k(x)=Q_k(2^kx)=P_k(\frac{x}{\rho}+x_0).
$$
Thus $\tilde P_k$ is a renormalization of $P_k$. As a conclusion, the renormalization of the sequence $\{P_k:k\ge 1\}$ behave like $\{2\cos 2^kx: k\ge 1\}$ locally, which gives a precise version of Section \ref{gene-pic}.

}
\end{rem}

\subsection{Convergence properties}\

We prove Theorem \ref{exp-cvge} at the end of this subsection.
By the recurrence relation \eqref{iter}, we have for $k\ge 1$
\begin{eqnarray*}
Q_k(x)
&=&Q_{k-2}^2(x/4)(Q_{k-1}(x/2)-2)+2\\
&=&\Big(2\cos x/4+\Delta_{k-2}(x/4)\Big)^2\Big(2\cos x/2-2+\Delta_{k-1}(x/2)\Big)+2\\
&=&2\cos x+(2+2\cos \frac{x}{2})\cdot\Delta_{k-1}(\frac{x}{2})+
\\&&\Delta_{k-2}(\frac{x}{4})\Big(4\cos \frac{x}{4}+\Delta_{k-2}
(\frac{x}{4})\Big)\Big(2\cos \frac{x}{2}-2+\Delta_{k-1}(\frac{x}{2})\Big).
\end{eqnarray*}
Thus we conclude that for $k\ge 1$
\begin{eqnarray}\label{delta-k-x}
\Delta_k(x)&=&(2+2\cos \frac{x}{2})\cdot\Delta_{k-1}(\frac{x}{2})+\\
\nonumber&&\Delta_{k-2}(\frac{x}{4})\Big(4\cos \frac{x}{4}+\Delta_{k-2}
(\frac{x}{4})\Big)\Big(2\cos \frac{x}{2}-2+\Delta_{k-1}(\frac{x}{2})\Big).
\end{eqnarray}

\begin{prop}\label{model}
Assume $
\Delta_k(x)=\sum_{k\ge 3} \Delta_{k,n}x^n
$ and  $(\Delta_k(x))_{k\ge-1}$ satisfy \eqref{delta-k-x}.
Fix $k\ge0$. If there exists $0<\delta\le1$ such that $|\Delta_{k-1,n}|,  |\Delta_{k,n}|\le \delta$ for any $n\ge3$,
then
$$
|\Delta_{k+1}(x)|^*\preceq
\delta\left(4\frac{x^3}{2^3}+4\frac{x^4}{2^4}+
9\sum\limits_{n\ge5}\frac{x^n}{2^n}\right).
$$
\end{prop}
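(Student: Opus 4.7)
The plan is to apply the $|\cdot|^*$ operator to the recurrence \eqref{delta-k-x} and reduce the claim to a coefficient-wise inequality of formal power series, then extract the $x^n$ coefficient on the right-hand side and compare with the stated bound.

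Using submultiplicativity $|fg|^*\preceq|f|^*|g|^*$, subadditivity $|f+g|^*\preceq|f|^*+|g|^*$, and the identity $|\cos y|^*=\cosh y$, one first obtains
\begin{align*}
|\Delta_{k+1}|^* \preceq{} & \bigl(2+2\cosh(x/2)\bigr)\,B(x) \\
& + C(x)\,\bigl(4\cosh(x/4)+C(x)\bigr)\,\bigl(2\cosh(x/2)-2+B(x)\bigr),
\end{align*}
where $B(x):=|\Delta_k(x/2)|^*$ and $C(x):=|\Delta_{k-1}(x/4)|^*$. The hypothesis yields $B\preceq\delta\sum_{n\ge3}(x/2)^n$ and $C\preceq\delta\sum_{n\ge3}(x/4)^n$. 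Call the first summand on the right $S_1$ and the second $S_2$.

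For $n=3,4$, the three-factor term $S_2$ has minimal support $x^{3+0+2}=x^5$ (since $C$ starts at $x^3$ and the factor $2\cosh(x/2)-2+B$ starts at $x^2$), so only $S_1$ contributes; since the constant term of $2+2\cosh(x/2)$ equals $4$, we immediately read off $[x^3]|\Delta_{k+1}|^*\le\delta/2=4\delta/2^3$ and $[x^4]|\Delta_{k+1}|^*\le\delta/4=4\delta/2^4$. For $n\ge5$, I would use that $2+2\cosh(x/2)$ has only even powers to obtain the uniform bound $[x^n]S_1\le\delta A(2)/2^n=(2+2\cosh 1)\delta/2^n<5.1\,\delta/2^n$, where $A(y):=2+2\cosh(y/2)$. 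For $S_2$ I would refine the bounds on the two trinomial factors using $\delta\le1$ together with $j!\ge2$ for $j\ge2$, giving $4\cosh(x/4)+C\preceq 4+x^2/8+2(x/4)^3/(1-x/4)$ and $2\cosh(x/2)-2+B\preceq x^2/4+2(x/2)^3/(1-x/2)$; expanding the product of $C$ with these two expressions yields a short sum of rational terms of the form $x^p/((1-x/4)^a(1-x/2)^b)$, and the $[x^n]$ of each admits an explicit geometric bound via partial fractions. Summing the pieces gives $[x^n]S_2\le K\delta/2^n$ with $K\le 9-(2+2\cosh 1)\approx 3.9$, and combining with the $S_1$ estimate completes the proof.

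The main obstacle is the bookkeeping needed to verify that the total constant is at most $9$. Because $[x^n]S_1$ already accounts for roughly $5.09\,\delta/2^n$, only about $3.9\,\delta/2^n$ of budget remains for $S_2$; moreover the summands in $S_2$ containing the factor $1/(1-x/2)$ decay at exactly the target rate $1/2^n$, so their prefactors must be controlled carefully to guarantee the uniform bound $K\le 3.9$ for every $n\ge5$ rather than just for large $n$.
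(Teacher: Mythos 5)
Your proposal follows essentially the same route as the paper: split $\Delta_{k+1}$ into the same two pieces $S_1=(2+2\cos\frac x2)\Delta_k(\frac x2)$ and $S_2=\Delta_{k-1}(\frac x4)(II)(III)$, pass to $|\cdot|^*$, and compare coefficients term by term, with the $n=3,4$ cases coming only from $S_1$ exactly as in the paper. Your $S_1$ bound via the generating-function evaluation $\sum_{j\le n-3}2^j[x^j]A\le A(2)=2+2\cosh 1<5.1$ is a touch cleaner than the paper's direct $6\delta/2^n$, but the $S_2$ estimate is asserted rather than derived — you essentially set $K$ equal to the available budget $9-(2+2\cosh 1)$ instead of computing it; carrying out the convolution (the dominant piece $C(x)\cdot 4\cdot\frac{2(x/2)^3}{1-x/2}$ and companions sums to roughly $2.5\,\delta/2^n$, with the $n=5,6,7$ cases checked directly) is needed to actually close the argument, and it does close with room to spare.
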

\begin{proof}
We only need to prove the case $k=1$.
By \eqref{delta-k-x} we write
\begin{eqnarray*}
\Delta_1(x)&=&(2+2\cos \frac{x}{2})\cdot\Delta_0(\frac{x}{2})+\\
&&\Delta_{-1}(\frac{x}{4})\Big(4\cos \frac{x}{4}+\Delta_{-1}(\frac{x}{4})\Big)
\Big(2\cos \frac{x}{2}-2+\Delta_0(\frac{x}{2})\Big)\\
&=:&(I)+\Delta_{-1}(\frac{x}{4})(II)(III),
\end{eqnarray*}
where
$$
\begin{cases}
(I)&=(2+2\cos \frac{x}{2})\cdot\Delta_0(\frac{x}{2}),\\
(II)&=4\cos \frac{x}{4}+\Delta_{-1}(\frac{x}{4}),\\
(III)&=2\cos \frac{x}{2}-2+\Delta_0(\frac{x}{2}).
\end{cases}
$$
Since $\delta\le1$, we have
$$
\begin{array}{rcl}
|(I)|^*&\preceq&\left(4+2\sum\limits_{n\ge2}\frac{x^n}{n!2^n}\right)
\sum\limits_{n\ge3}\delta\frac{x^n}{2^n}\\
&=&4\delta\sum\limits_{n\ge3}\frac{x^n}{2^n}+
2\delta\sum\limits_{n\ge5}\frac{x^n}{2^n}\sum_{k=2}^{n-3}\frac{1}{k!}\\
&\preceq&4\delta\sum\limits_{n\ge3}\frac{x^n}{2^n}+
2\delta\sum\limits_{n\ge5}\frac{x^n}{2^n}(e-2)\\
&\preceq& 4\delta\frac{x^3}{2^3}+4\delta\frac{x^4}{2^4}+
6\delta\sum\limits_{n\ge5}\frac{x^n}{2^n}\\
|(II)|^*&\preceq&4+4\sum\limits_{n\ge2}\frac{x^n}{n!4^n}+
\delta\sum\limits_{n\ge3}\frac{x^n}{4^n}\preceq
4+2\sum\limits_{n\ge2}\frac{x^n}{4^n}\\
|(III)|^*&\preceq&\frac{x^2}{2^2}\frac{2}{2!}+
\sum\limits_{n\ge3}\frac{x^n}{2^n}\left(\frac{2}{n!}+\delta\right)
\preceq \frac{x^2}{2^2}+2\sum\limits_{n\ge3}\frac{x^n}{2^n}.
\end{array}
$$
Thus
$$
\begin{array}{rcl}
|\Delta_{-1}(\frac{x}{4})\times (II)|^*
&\preceq&\delta\sum_{n\ge3}\frac{x^n}{4^n}\left(4+2\sum\limits_{n\ge2}\frac{x^n}{4^n}\right)\\
&=&4\delta\sum\limits_{n\ge3}\frac{x^n}{4^n}+2\delta\sum\limits_{n\ge5}\frac{x^n}{4^n}(n-4)\\
&=&4\delta\frac{x^3}{4^3}+2\delta\sum\limits_{n\ge4}\frac{x^n}{4^n}(n-2)\\
|\Delta_{-1}(\frac{x}{4})\times (II)\times (III)|^*&
\preceq &
\frac{x^5}{2^5}\frac{\delta}{2}+\sum\limits_{n\ge6}\frac{x^n}{2^n}\frac{(n-4)\delta}{2^{n-3}}
+\\
&&\delta\sum\limits_{n\ge6}\frac{x^n}{2^n}+
\delta\sum\limits_{n\ge7}\frac{x^n}{2^n}\sum\limits_{k=4}^{n-3}\frac{k-2}{2^{k-2}}\\
&\preceq&\frac{x^5}{2^5}\frac{\delta}{2}+\frac{x^6}{2^6}\frac{5}{4}\delta+
3\delta\sum\limits_{n\ge7}\frac{x^n}{2^n}.
\end{array}
$$
This prove the proposition.
\end{proof}

To prepare the proof for the dimension of the spectrum, we also need to study a variant  of \eqref{delta-k-x}, i.e., for  $\tilde \Delta_k(x)=\sum_{n\ge0}\tilde \Delta_{k,n}x^n, (k\ge -1)$
there exists a constant $t_0$ such that for any $k\ge1$,
\begin{eqnarray}\label{delta-k-x-1}
\tilde \Delta_k(x)&=&(2+2\cos \frac{x+t_0}{2})\cdot\tilde \Delta_{k-1}(\frac{x}{2})+\tilde \Delta_{k-2}(\frac{x}{4})\cdot\\
\nonumber&&\Big(4\cos \frac{x+t_0}{4}+\tilde \Delta_{k-2}
(\frac{x}{4})\Big)\Big(2\cos \frac{x+t_0}{2}-2+\tilde \Delta_{k-1}(\frac{x}{2})\Big).
\end{eqnarray}

\begin{prop}\label{model2}
Assume $(\tilde \Delta_k(x))_{k\ge-1}$ satisfy \eqref{delta-k-x-1}.
Fix $k\ge0$. If there exist $0<\delta,\beta\le1$ such that
$|\tilde \Delta_{k-1,n}|,
|\tilde \Delta_{k,n}|\le \delta \beta^{-n}$ for any $n\ge0$,
then
$$
|\tilde \Delta_{k+1}(x)|^*\preceq 152\delta\sum_{n\ge0}\frac{x^n}{(2\beta)^n}.
$$
\end{prop}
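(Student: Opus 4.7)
The plan is to follow the template of Proposition \ref{model}: apply the recurrence \eqref{delta-k-x-1} (at index $k+1$) to decompose
$$
\tilde\Delta_{k+1}(x)=(I)+\tilde\Delta_{k-1}(x/4)\cdot(II)\cdot(III),
$$
with $(I)=(2+2\cos\frac{x+t_0}{2})\cdot\tilde\Delta_k(x/2)$, $(II)=4\cos\frac{x+t_0}{4}+\tilde\Delta_{k-1}(x/4)$, and $(III)=2\cos\frac{x+t_0}{2}-2+\tilde\Delta_k(x/2)$; then bound each factor in the $|\cdot|^*$-norm and combine using submultiplicativity. The new feature relative to Proposition \ref{model} is the shift by $t_0$ inside the cosines and the allowance of a non-zero constant coefficient in $\tilde\Delta_j$: the constant term of $2\cos\frac{x+t_0}{2}-2$ no longer vanishes, and the ``model'' majorant $\delta\sum_{n\ge 3}x^n/2^n$ used previously must be replaced by the full geometric series $\delta\sum_{n\ge 0}x^n/(2\beta)^n$.

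For the building blocks I will use the identity $\cos\frac{x+t_0}{r}=\cos(t_0/r)\cos(x/r)-\sin(t_0/r)\sin(x/r)$ to get the coefficientwise bounds $|2+2\cos\frac{x+t_0}{2}|^*\preceq 4\sum_{n\ge 0}x^n/2^n$, $|4\cos\frac{x+t_0}{4}|^*\preceq 4\sum_{n\ge 0}x^n/4^n$, and $|2\cos\frac{x+t_0}{2}-2|^*\preceq 4\sum_{n\ge 0}x^n/2^n$ (the leading $4$ absorbs the non-vanishing constant term $2\cos(t_0/2)-2$, using $n!\ge 1$ to majorize the remaining Taylor coefficients of $\cos$). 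Combined with the hypothesis in the form $|\tilde\Delta_j(x/r)|^*\preceq \delta\sum_{n\ge 0}x^n/(r\beta)^n$ and with $\delta,\beta\le 1$, this yields $|(II)|^*\preceq 5\sum_{n\ge 0}x^n/(4\beta)^n$ and $|(III)|^*\preceq 5\sum_{n\ge 0}x^n/(2\beta)^n$.

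The final step is to convert each of $|(I)|^*$ and $|\tilde\Delta_{k-1}(x/4)\cdot(II)\cdot(III)|^*$ into a uniform multiple of $\delta\sum_{n\ge 0}x^n/(2\beta)^n$. For $(I)$ the ordinary convolution of $4\sum x^n/2^n$ with $\delta\sum x^n/(2\beta)^n$ gives coefficients bounded by a constant times $\delta/(2\beta)^n$ after using $\beta\le 1$ to factor $1/(2\beta)^n$ outside the sum. For the triple product the $n$-th coefficient is a three-index convolution $\sum_{a+b+c=n}$, and one exploits the telescoping identity $(2\beta)^{a+b}/(4\beta)^{a+b}=1/2^{a+b}$ to collapse the factors with denominator $4\beta$ against the target denominator $2\beta$; the remaining sum is dominated by $\sum_{m\ge 0}(m+1)/2^{m}=4$, which is exactly what pins down the numerical constant. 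Adding the contributions of $(I)$ and of the triple product yields the desired bound with constant $152$.

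The main obstacle I expect is purely the book-keeping of the triple convolution: one must keep track of how the non-vanishing constant terms (absent in Proposition \ref{model}) propagate through the product without destroying the uniform $1/(2\beta)^n$ decay, and choose the majorants tightly enough so that the three-index sum converges to an absolute constant rather than growing polynomially in $n$. The telescoping $(2\beta)^{a+b}/(4\beta)^{a+b}=1/2^{a+b}$ is what makes this possible; once it is in place, the rest of the proof is a straightforward computation.
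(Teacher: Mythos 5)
Your decomposition and overall strategy coincide with the paper's: split $\tilde\Delta_{k+1}=(I)+\tilde\Delta_{k-1}(x/4)(II)(III)$, bound each factor coefficient-wise, and convolve. The triple-product part of your plan is sound: the $4\beta$-vs-$2\beta$ mismatch gives $\sum_{a+b+c=n} 2^{-(a+b)}\le\sum_{m\ge 0}(m+1)2^{-m}=4$, so that piece yields a uniform multiple of $\sum x^n/(2\beta)^n$.

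The gap is in your estimate for $(I)$. You replace the cosine majorant by the pure geometric series, claiming $|2+2\cos\frac{x+t_0}{2}|^*\preceq 4\sum_{n\ge 0}x^n/2^n$ and then asserting that the convolution with $\delta\sum x^n/(2\beta)^n$ is ``a constant times $\delta/(2\beta)^n$.'' That is false at the endpoint $\beta=1$ (which the proposition allows). Factoring $1/(2\beta)^n$ out of
\[
\sum_{a+b=n}\frac{4}{2^a}\cdot\frac{\delta}{(2\beta)^b}
=\frac{4\delta}{(2\beta)^n}\sum_{a=0}^{n}\beta^a
\]
leaves $\sum_{a=0}^n\beta^a$, which equals $n+1$ when $\beta=1$. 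Unlike the triple product, $(I)$ has no $1/(4\beta)^n$ factor to telescope against $1/(2\beta)^n$, so there is nothing to kill this linear growth once you have thrown away the factorial. The paper's proof keeps the $1/n!$ decay in the cosine factor precisely for this reason: with $|2+2\cos\frac{x+t_0}{2}|^*\preceq 4\sum_{n\ge 0}\frac{x^n}{n!\,2^n}$ the convolution coefficient becomes $\frac{4\delta}{(2\beta)^n}\sum_{a=0}^n\frac{\beta^a}{a!}\le\frac{4e^\beta\delta}{(2\beta)^n}\le\frac{12\delta}{(2\beta)^n}$, which is the uniform bound you need. If you restore the factorial in the $(I)$ estimate (and you may also keep it in $(II)$, as the paper does, though your weaker $(II)$ bound still works thanks to the telescoping), the rest of your computation goes through and, combined with your $100\delta$ from the triple product, yields a constant $\le 112<152$.
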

\begin{proof}
We only need to prove the case $k=1$.
By \eqref{delta-k-x-1}, we can write
\begin{eqnarray*}
\tilde \Delta_1(x)&=&(I)+\tilde \Delta_{-1}(\frac{x}{4}))(II)(III).
\end{eqnarray*}
where
$$\begin{array}{l}
(I)=\Big(2+2\cos (\frac{x}{2}+\frac{t_0}{2})\Big)\cdot\tilde \Delta_0(\frac{x}{2}),\\
(II)=4\cos (\frac{x}{4}+\frac{t_0}{4})+\tilde \Delta_{-1}(\frac{x}{4}),\\
(III)=2\cos (\frac{x}{2}+\frac{t_0}{2})-2+\tilde \Delta_0(\frac{x}{2}).
\end{array}
$$
Note that
\begin{equation}\label{cosine}
|\cos(x+x_0)|^*=|\sum_{n\ge0}\frac{\cos(x_0+n\pi/2)}{n!}x^n|^\ast\preceq \sum_{n\ge0}\frac{x^n}{n!}.
\end{equation}
Since $\delta,\beta\le1$, we have
$$
\begin{array}{rcl}
|(I)|^*&\preceq&\Big(\sum\limits_{n\ge0}4\frac{x^n}{n!2^n}\Big)\Big(
\sum\limits_{n\ge0}\delta\frac{x^n}{2^n\beta^n}\Big)
\preceq
4\delta\sum\limits_{n\ge0}\frac{x^n}{(2\beta)^n}\sum_{k=0}^{n}\frac{\beta^k}{k!}\\
&\preceq& 4\delta e^\beta\sum\limits_{n\ge0}\frac{x^n}{(2\beta)^n}
\preceq12\delta\sum\limits_{n\ge0}\frac{x^n}{(2\beta)^n}\\
|(II)|^*&\preceq&4\sum\limits_{n\ge0}\frac{x^n}{n!4^n}+
\delta\sum\limits_{n\ge0}\frac{x^n}{(4\beta)^n}
=\sum\limits_{n\ge0}\frac{x^n}{4^n}
\left(\frac{4}{n!}+\frac{\delta}{\beta^n}\right)\\
|(III)|^*&\preceq&
\sum\limits_{n\ge0}\frac{x^n}{2^n}\left(\frac{4}{n!}+\frac{\delta}{\beta^n}\right)
\preceq (4+\delta)\sum\limits_{n\ge0}\frac{x^n}{(2\beta)^n}.
\end{array}
$$
Consequently
$$
\begin{array}{rcl}
|\tilde \Delta_{-1}(\frac{x}{4})\times (II)|^*&\preceq&
\delta\sum_{n\ge 0}\frac{x^n}{4^n}\sum_{k=0}^n\left[\frac{4}{k!}
+\frac{\delta}{\beta^k}\right]\frac{1}{\beta^{n-k}}\\
&\preceq&\delta\sum_{n\ge 0}\frac{x^n}{(4\beta)^n}(4e^\beta+(n+1)\delta)
\end{array}
$$

$$\begin{array}{rcl}
|\tilde \Delta_{-1}(\frac{x}{4})\times (II)\times (III)|^*&
\preceq &
\delta(4+\delta)\sum_{n\ge0}\frac{x^n}{(2\beta)^n}
\sum_{k=0}^n \frac{4e^\beta+(k+1)\delta}{2^k}.\\
&\preceq&\delta(4+\delta)(8e^\beta+4\delta)\sum_{n\ge0}\frac{x^n}{(2\beta)^n}\\
&\preceq&140\delta\sum_{n\ge0}\frac{x^n}{(2\beta)^n}.
\end{array}
$$
This prove the proposition.
\end{proof}

\begin{prop}\label{control-coefficient}
Let  $(P_{-1},P_0)$ be $(1,1)$-regular at $x_0$ with renormalization factor $\rho$,
and $(P_k)_{k\ge-1}$ satisfy \eqref{iter}. Define $(Q_k)_{k\ge-1}$, $(\Delta_k)_{k\ge-1}$ and $(\Delta_{k,n})_{k\ge-1,n\ge3}$ as in \eqref{iterscale} and \eqref{delta-k}.
Let  $\alpha=2^{-1/2}$, then for any $k\ge1$,
\begin{equation}\label{pk}
|\Delta_k(x)|^*\preceq 9\alpha^{k-2}\sum_{n\ge3}\frac{x^n}{2^n}.
\end{equation}
Consequently $(P_{k-1},P_{k})$ is $(9\alpha^{k-3},2)$-regular at $x_0$ for any $k\ge2$.
\end{prop}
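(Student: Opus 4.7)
The plan is to iterate Proposition~\ref{model} with two-level bookkeeping: track the pointwise maximum $m_k:=\max_{n\ge 3}|\Delta_{k,n}|$ to power a contractive induction, and separately read the $2^{-n}$ tail decay off from the full series bound in the conclusion of Proposition~\ref{model}. The key observation is that the series
$B(x):=\tfrac{4x^3}{2^3}+\tfrac{4x^4}{2^4}+9\sum_{n\ge 5}x^n/2^n$ on the right-hand side of Proposition~\ref{model} attains its maximum coefficient $1/2$ at $x^3$, giving a contraction by $\alpha^2=1/2$ per step, which is exactly the geometric rate needed to produce the $9\alpha^{k-2}$ bound.

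First I will show by induction that $m_{2j-1},m_{2j}\le 2^{-j}$ for $j\ge 1$, equivalently $\max(m_{k-2},m_{k-1})\le \alpha^{k-2}$ for all $k\ge 1$. The base $m_{-1}=m_0\le 1$ comes from the $(1,1)$-regularity hypothesis. For the inductive step I apply Proposition~\ref{model} with $\delta_k:=\max(m_{k-2},m_{k-1})$; inductively $\delta_k\le 1$, so the hypothesis is met, and the conclusion $|\Delta_k|^*\preceq \delta_k B(x)$ combined with $\max_n B_n=1/2$ yields $m_k\le \delta_k/2$. The resulting two-term recurrence $m_k\le \max(m_{k-2},m_{k-1})/2$ solves immediately to the claimed pair bound; the $k=1$ case is absorbed by $\alpha^{-1}=\sqrt 2\ge 1$.

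Second, the full coefficient-wise conclusion of Proposition~\ref{model} also gives $|\Delta_{k,n}|\le 9\delta_k/2^n$ for every $n\ge 3$ (since $B_n\le 9/2^n$, with room to spare at $n=3,4$). Substituting the bound $\delta_k\le \alpha^{k-2}$ from the previous step produces \eqref{pk}. The regularity consequence then follows from the definition of $(\delta,\beta)$-regular germ: viewing $(P_{k-1},P_k)$ as a new initial pair forces the renormalization factor to be $2^k\rho$, under which the new $\tilde Q_{-1},\tilde Q_0$ coincide with $Q_{k-1},Q_k$, so the new $\tilde\Delta_{-1},\tilde\Delta_0$ are $\Delta_{k-1},\Delta_k$; applying \eqref{pk} at indices $k-1$ and $k$ with $\beta=2$ and $\delta=9\alpha^{k-3}$ verifies the regularity (using $9\alpha^{k-2}\le 9\alpha^{k-3}$ since $\alpha<1$).

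The main subtle point is identifying the correct inductive quantity. A naive iteration of Proposition~\ref{model} taking $\delta$ to be the pointwise bound $9\delta_{\text{prev}}/2^3=9\delta_{\text{prev}}/8$ read off from the conclusion produces only the growing recurrence $\delta_{k+1}\le 9\delta_k/8$, which never contracts. The fix is to use the sharper bound $m_k\le \delta_k/2$ coming from $\max_n B_n=1/2$ at $x^3$ (rather than from the generic tail $9/2^n$) to power the contractive induction, and to read off the $2^{-n}$ decay only at the end from the full series bound. Once this two-level split is noticed, the calculation is immediate.
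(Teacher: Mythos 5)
Your proof is correct and follows essentially the same route as the paper: you iterate Proposition~\ref{model} using the sup-norm of the coefficients as the contracting quantity and read off the $2^{-n}$ tail from the series bound afterward, whereas the paper carries the two bounds jointly as a $\min$ through the induction (but only uses the flat $2^{-j}$ component to propagate), so the underlying computation is identical. Your final paragraph — identifying the renormalization factor of $(P_{k-1},P_k)$ as $2^k\rho$ so that its $\tilde\Delta_{-1},\tilde\Delta_0$ are exactly $\Delta_{k-1},\Delta_k$ — is a useful spelling-out of the ``Consequently'' clause, which the paper leaves implicit.
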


\begin{proof}
By the condition we have
$$|\Delta_{-1,n}|\le 1,\quad |\Delta_{0,n}|\le 1,\quad \forall n\ge3. $$
By Proposition \ref{model},
$$ 
|\Delta_1(x)|^*\preceq4\frac{x^3}{2^3}+4\frac{x^4}{2^4}+9\sum\limits_{n\ge5}\frac{x^n}{2^n}.
$$
So for any $n\ge3$,
$$|\Delta_{1,n}|\le \min\{1/2, 9/2^{n}\}.$$

For any $n\ge 3$, $\Delta_{0,n}\le 1$, $\Delta_{1,n}\le 1/2<1$,
then by a same argument as above,
$$|\Delta_{2,n}|\le \min\{1/2, 9/2^{n}\}.$$

Continue this process, by Proposition \ref{model} and induction, for any $k\ge0$ and $n\ge3$,
$$\begin{array}{l}
|\Delta_{2k+1,n}|\le \min\{2^{-k-1}, (9\times 2^{-k})\times 2^{-n}\}\\
|\Delta_{2k+2,n}|\le \min\{2^{-k-1}, (9\times 2^{-k})\times 2^{-n}\}.
\end{array}$$
Recall that $\alpha=\sqrt{2}/2$. Thus  for any $n\ge3$,
$$|\Delta_{k,n}|\le 9\alpha^{k-2}\times 2^{-n},$$
which implies \eqref{pk}.
\end{proof}

Now we are ready for the proof of Theorem \ref{exp-cvge}. At first we define the absolute constants mentioned in that theorem.
Let  $\tilde C_0=9/(4-\pi)$ and for any $m\ge 0$ define
$$
\begin{cases}
C_m&=\tilde C_m(2^{m-1}\pi)^3 \\
\tilde C_{m+1}&=\tilde C_m\left(\frac{1}{2\alpha}+\frac{(4+C_m)^2}{64\alpha^2}\right).
\end{cases}
$$

\noindent {\bf Proof of Theorem \ref{exp-cvge}}.\  We prove it by induction on $m$.

At first consider $m=0.$  Fix $x\in[-\pi/2,\pi/2]$.
By Proposition \ref{control-coefficient}, for $k\ge 1$,
$$
|\Delta_k(x)|\le 9\alpha^{k-2}\sum_{n=3}^\infty\frac{|x|^n}{2^n}=
\frac{9\alpha^{k-2}|x|^3}{8-4|x|}\le \frac{9\alpha^k|x|^3}{4-\pi}=
\tilde C_0\alpha^k|x|^3\le   C_0\alpha^k.
$$

 Next we assume the result holds for $m<n$. Take  $x\in[-2^{n-1}\pi,2^{n-1}\pi]$,
 then $x/2,x/4\in[-2^{n-2}\pi,2^{n-2}\pi]$.
 For any $k\ge2n+1$, we have $k-1,k-2\ge 2n-1.$ By \eqref{delta-k-x} and induction,
\begin{eqnarray*}
|\Delta_k(x)|&=&|4\cos^2 \frac{x}{4}\cdot \Delta_{k-1}(\frac{x}{2})+\\
&&\Delta_{k-2}(\frac{x}{4})\Big(4\cos \frac{x}{4}+\Delta_{k-2}(\frac{x}{4})\Big)
\Big(2(\cos \frac{x}{2}-1)+\Delta_{k-1}(\frac{x}{2})\Big)|\\
&\le&4|\Delta_{k-1}(\frac{x}{2})|+|\Delta_{k-2}(\frac{x}{4})|(4 +|\Delta_{k-2}(\frac{x}{4})|)
(4+|\Delta_{k-1}(\frac{x}{2})|)\\
&\le&\frac{\tilde C_{n-1} \alpha^{k-1}}{2}|x|^3+\frac{\tilde C_{n-1}
\alpha^{k-2}}{64}|x|^3(4+C_{n-1})^2\\
&\le&\tilde C_{n-1} \left(\frac{1 }{2\alpha}+
\frac{ (4+C_{n-1})^2 }{64\alpha^2}\right)\alpha^k|x|^3\\
&=&\tilde C_n\alpha^k|x|^3\le C_n\alpha^k.
\end{eqnarray*}
 By induction the proof is finished.
 \hfill $\Box$


\section{ Generate new germs and control the distance of base points }\label{sec-3}

In this section we prepare the proof of Theorem \ref{main-bd}. Especially we will show that under some condition, new germs will appear and we can control the distance of the base points of the germs.

\subsection{Birth of new germs}\

In this subsection,
we exhibit  that how a regular germ of one pair can give birth to 
some new regular germs for the iterated pairs.
At first we prove several preliminary results.  Define   $\delta_0=0.01$, $\delta_1=0.0005$ and $\delta_2=10^{-10}$.

\begin{lem}\label{zero}
Fix $\delta\in(0,\delta_0)$ and  $k\in \mathbb{Z}.$ Assume $\varphi$ is a polynomial satisfying
$$
|\varphi(x)-2\cos x|\le \delta,\quad \forall x\in2k\pi+[0,\pi]\ (\mbox{resp. } 2k\pi+[-\pi,0]).
$$
We further assume $x_+$ (resp. $x_-$) is the minimal $x\in2k\pi+[0,\pi]$
(rsp. maximal $x\in2k\pi+ [-\pi,0]$) such that
$\varphi(x)=0$. Then
$$
|x_+-(2k\pi+\pi/2)|\le \delta\quad (\mbox{resp. }|x_--(2k\pi-\pi/2)|\le \delta ).
$$

\end{lem}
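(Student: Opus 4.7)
The plan is to reduce to the case $k=0$ (by $2\pi$-periodicity of $2\cos x$ the general statement is identical up to translation) and to prove only the $x_+$ version, since the $x_-$ version follows by an obvious symmetric argument. After the reduction, the entire argument rests on one elementary analytic fact about the cosine: for $\delta\in(0,\delta_0)=(0,0.01)$ one has
\[
2\sin\delta>\delta,
\]
which follows from $2\sin\delta-\delta=\delta-\tfrac{\delta^3}{3}+O(\delta^5)$ and the smallness of $\delta_0$.

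With this in hand I would run a standard intermediate-value argument in three short steps. First, on the interval $[0,\pi/2-\delta]$ the function $2\cos x$ is decreasing and therefore bounded below by $2\cos(\pi/2-\delta)=2\sin\delta>\delta$; combined with the hypothesis $|\varphi-2\cos|\le\delta$ this forces $\varphi>0$ on this interval, so no zero can occur there and hence $x_+\ge\pi/2-\delta$. Second, evaluate at $\pi/2+\delta$: here $2\cos(\pi/2+\delta)=-2\sin\delta<-\delta$, and the hypothesis gives $\varphi(\pi/2+\delta)<-2\sin\delta+\delta<0$. Third, since $\varphi(\pi/2-\delta)>0$ and $\varphi(\pi/2+\delta)<0$, the intermediate value theorem delivers a zero of $\varphi$ in $(\pi/2-\delta,\pi/2+\delta)$; combined with the first step this zero must be $x_+$ itself, giving $|x_+-\pi/2|<\delta$.

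There is essentially no obstacle here: the entire proof is a direct comparison of $\varphi$ with $2\cos x$ using the uniform bound $\delta$, and the only ``nontrivial'' input is the scalar inequality $2\sin\delta>\delta$ on $(0,\delta_0)$, which is comfortably true since $\delta_0=0.01$ is far smaller than the first positive root of $2\sin\delta-\delta$. The only point deserving a sentence in the write-up is to verify that the estimates $2\cos x\ge 2\sin\delta$ on $[0,\pi/2-\delta]$ and $2\cos(\pi/2\pm\delta)=\mp 2\sin\delta$ are indeed valid, both of which are immediate from monotonicity of $\cos$ on $[0,\pi]$ and the identity $\cos(\pi/2\pm\delta)=\mp\sin\delta$. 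After restoring the translation by $2k\pi$, one obtains the stated conclusion for arbitrary $k\in\mathbb{Z}$.
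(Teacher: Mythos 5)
Your proposal is correct, but it takes a genuinely different route from the paper. The paper evaluates the hypothesis directly at the assumed zero $x_+$: since $\varphi(x_+)=0$, the hypothesis gives $|2\cos x_+|\le\delta$, while the Jordan inequality $|\sin t|\ge \tfrac{2}{\pi}|t|$ on $[-\pi/2,\pi/2]$ (applied with $t=2k\pi+\pi/2-x_+$) gives $|2\cos x_+|\ge \tfrac{4}{\pi}|x_+-2k\pi-\pi/2|$; combining these yields $|x_+-2k\pi-\pi/2|\le\tfrac{\pi}{4}\delta<\delta$ in one line. You instead sandwich $x_+$ by sign considerations: positivity of $\varphi$ on $[0,\pi/2-\delta]$ (via $2\sin\delta>\delta$), negativity at $\pi/2+\delta$, and the intermediate value theorem. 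Both are elementary and correct; your version additionally establishes the \emph{existence} of a zero (which the lemma assumes), while the paper's is shorter and gives the marginally sharper bound $\tfrac{\pi}{4}\delta$. One small nit in your write-up: the zero produced by IVT need not be $x_+$ itself (there could be several zeros clustered near $\pi/2$); what you actually obtain is $\pi/2-\delta<x_+\le(\text{IVT zero})<\pi/2+\delta$, which is all that is required, so the conclusion stands.
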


\begin{proof}
We only prove the result for  $x_+$ when  $k=0$ since the other cases can be proven similarly.

By the assumption we have
$$
|2\cos x_+|=|\varphi(x_+)-2\cos x_+| \le \delta.
$$
On the other hand since $|\sin x|\ge 2|x|/\pi$ for $|x|\le \pi/2$ and $|2k\pi+\frac{\pi}{2}-x_+|\le \frac{\pi}{2}$
$$
|2\cos x_+|=2|\sin(2k\pi+\frac{\pi}{2}-x_+)|\ge \frac{4}{\pi}|x_+-\frac{\pi}{2}-2k\pi|.
$$
This prove the lemma.
\end{proof}

\begin{cor}\label{zero-1}
Fix $\delta\in(0,\delta_0)$, $k\in \mathbb{Z}$ and $m\in\mathbb{N}.$ Assume $\varphi$ is a polynomial such that for any $x\in2^{1-m}k\pi+[0,2^{-m}\pi]$ (resp. $ 2^{1-m}k\pi+[-2^{-m}\pi,0])$
$$
|\varphi(x)-2\cos2^m x|\le \delta.
$$
We further assume $x_+$ (resp. $x_-$) is the minimal $x\in2^{1-m}k\pi+[0,2^{-m}\pi]$
(rsp. maximal $x\in2^{1-m}k\pi+ [-2^{-m}\pi,0]$) such that
$\varphi(x)=0$. Then
$$
|x_+-2^{-m}(2k\pi+\pi/2)|\le 2^{-m}\delta\quad (\mbox{resp. }|x_--2^{-m}(2k\pi-\pi/2)|\le 2^{-m}\delta ).
$$
\end{cor}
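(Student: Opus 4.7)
The plan is to reduce Corollary \ref{zero-1} directly to Lemma \ref{zero} by a scaling change of variables. Set $y = 2^m x$, so that the interval $2^{1-m}k\pi + [0, 2^{-m}\pi]$ in $x$ corresponds exactly to $2k\pi + [0,\pi]$ in $y$, and $2\cos 2^m x$ becomes $2\cos y$. Define $\tilde\varphi(y) := \varphi(y/2^m)$; since $\varphi$ is a polynomial, so is $\tilde\varphi$. The hypothesis translates verbatim into
\[
|\tilde\varphi(y) - 2\cos y| \le \delta, \qquad \forall y \in 2k\pi + [0,\pi],
\]
so Lemma \ref{zero} applies to $\tilde\varphi$ on this interval.

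Next I would observe that zeros transport correctly under the substitution: $\tilde\varphi(y) = 0$ iff $\varphi(y/2^m) = 0$, so the minimal zero $y_+$ of $\tilde\varphi$ in $2k\pi + [0,\pi]$ is exactly $2^m x_+$, where $x_+$ is the minimal zero of $\varphi$ in $2^{1-m}k\pi + [0, 2^{-m}\pi]$. (In particular, existence of $x_+$ is guaranteed by the existence of $y_+$, which comes from Lemma \ref{zero}.) Lemma \ref{zero} gives $|y_+ - (2k\pi + \pi/2)| \le \delta$, i.e., $|2^m x_+ - (2k\pi + \pi/2)| \le \delta$; dividing by $2^m$ yields $|x_+ - 2^{-m}(2k\pi + \pi/2)| \le 2^{-m}\delta$, as required. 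The case of $x_-$ on $2^{1-m}k\pi + [-2^{-m}\pi, 0]$ is handled identically, with $y \in 2k\pi + [-\pi,0]$.

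There is essentially no obstacle here: the corollary is a straightforward rescaling of Lemma \ref{zero}, and the only thing to verify is that the bookkeeping (interval endpoints, minimality of the zero, and the factor $2^{-m}$ in the final estimate) is consistent under the substitution $y = 2^m x$. I would write the proof in just a few lines, noting the correspondence between intervals, the invariance of the sup-norm bound under the linear change of variable, and the bijection between zeros.
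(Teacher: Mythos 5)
Your proof is correct and takes essentially the same approach as the paper, namely rescaling the variable by a factor of $2^m$ so that Lemma \ref{zero} applies directly. (Incidentally, the paper's one-line proof writes $\tilde\varphi(x)=\varphi(2^m x)$, which is the wrong direction of scaling; your $\tilde\varphi(y)=\varphi(y/2^m)$ is the correct substitution.)
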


\proof\ Define $\tilde \varphi(x)=\varphi(2^mx)$, then apply Lemma \ref{zero}, the result follows.
\hfill $\Box$

\begin{cor}\label{ratio-change}
Let $\delta\in(0,\delta_0)$. Assume $\varphi,\psi$ are two polynomials satisfies
$$
|\varphi(x)-2\cos x|,\ \ \  |\psi(x)-2\cos 8x|\le \delta \ \ \ \forall x\in[0,\pi].
$$
Let $x^\ast$ be the minimal $x\in[0,\pi]$ such that
$\varphi(x)=0$ and $x_\ast$ be the maximal $x\in(0,x^\ast)$
such that $\psi(x)=0$. Then
$$
  |(x^\ast-x_\ast)-\pi/16|\le 2\delta.
$$
\end{cor}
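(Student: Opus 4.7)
The plan is to localize $x^*$ near $\pi/2$ via Lemma~\ref{zero}, locate a zero of $\psi$ near $7\pi/16$ via Corollary~\ref{zero-1}, verify that this zero is in fact the maximal zero of $\psi$ in $(0,x^*)$, and finally combine the two localizations by the triangle inequality.

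First I apply Lemma~\ref{zero} to $\varphi$ with $k=0$ on $[0,\pi]$, obtaining $|x^*-\pi/2|\le\delta$. Next, I observe that among the zeros of $2\cos 8x$ in $[0,\pi]$, the largest one lying below $\pi/2$ is $7\pi/16$; this is precisely the ``$x_-$ zero'' produced by Corollary~\ref{zero-1} with $m=3$ and $k=2$, since $2^{1-3}\cdot 2\pi+[-2^{-3}\pi,0]=[3\pi/8,\pi/2]\subset[0,\pi]$ and $2^{-3}(4\pi-\pi/2)=7\pi/16$. Thus Corollary~\ref{zero-1} furnishes a maximal zero $\tilde x\in[3\pi/8,\pi/2]$ of $\psi$ satisfying $|\tilde x-7\pi/16|\le\delta/8$.

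The main step, and the only genuinely delicate one, is to show $\tilde x=x_*$, i.e., that $\psi$ has no zero in $(\tilde x,x^*]$. By definition of $\tilde x$, no zero of $\psi$ lies in $(\tilde x,\pi/2]$, so I only need to handle the possibility $x^*>\pi/2$. In that case any $x\in(\pi/2,x^*]\subset(\pi/2,\pi/2+\delta]$ satisfies $8x-4\pi\in(0,8\delta]$, hence $2\cos 8x\ge 2\cos 8\delta$, which exceeds $1.99$ for $\delta\le\delta_0=0.01$; therefore $\psi(x)\ge 2\cos 8\delta-\delta>0$ and no zero exists there. I also verify that $\tilde x\in(0,x^*)$, which follows from $\tilde x\le 7\pi/16+\delta/8<\pi/2-\delta\le x^*$ (using $\pi/16>9\delta/8$, valid since $\delta\le\delta_0$) and $\tilde x\ge 7\pi/16-\delta/8>0$.

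With $x_*=\tilde x$ established, the triangle inequality yields
\[
\bigl|(x^*-x_*)-\pi/16\bigr|=\bigl|(x^*-\pi/2)-(x_*-7\pi/16)\bigr|\le \delta+\tfrac{\delta}{8}=\tfrac{9\delta}{8}<2\delta,
\]
which is the claim. The sign argument in the previous paragraph is the hard part; everything else is a direct bookkeeping application of the previously established lemmata.
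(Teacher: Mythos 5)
Your proof is correct and follows essentially the same route as the paper's: localize $x^*$ near $\pi/2$ and a zero of $\psi$ near $7\pi/16$ via Lemma~\ref{zero} and Corollary~\ref{zero-1}, then conclude by the triangle inequality. You additionally spell out the identification step --- that the maximal zero of $\psi$ on $[3\pi/8,\pi/2]$ furnished by Corollary~\ref{zero-1} really is $x_*$, since $\psi$ stays bounded away from $0$ on $(\pi/2,x^*]$ --- which the paper's two-line proof leaves implicit.
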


\proof\
By Corollary \ref{zero-1} we have
$$
|x^\ast-\pi/2|\le \delta\ \ \ \ \text{ and }\ \ \ \ |x_\ast-7\pi/16|\le 2^{-3}\delta.
$$
Thus the result follows.
\hfill $\Box$

\begin{prop}\label{ratio}
Take any $\delta\le\delta_1$.
Assume  polynomial pair $(P_{-1},P_0)$  has a
$(\delta,2)$-regular $\rho$-germ at $x_0$. Then
there exist $y^-_{-1}<y^-_0<x_0<y^+_0<y^+_{-1}$  such that
$$
P_k(y^-_k)=P_k(y^+_k)=0; \ \  P_k(x)>0 \ \ (x\in I_k^-\cup I_k^+), \ \ \  (k=-1,0).
$$
where $I^-_k=(y^-_k,x_0]$, $I^+_k=[x_0,y^+_k)$.  Moreover
$$
\frac{|I^+_{-1}|}{|I^+_0|}, \ \ \ \frac{|I^-_{-1}|}{|I^-_0|}\le 2.1.
$$
\end{prop}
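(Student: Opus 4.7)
The plan is to convert the statement into one about the renormalized polynomials $Q_{-1}$ and $Q_0$ of \eqref{iterscale}, which by hypothesis are close to $2\cos x$ on a neighborhood of the origin, and to read off the zeros of $P_{-1},P_0$ from those of $Q_{-1},Q_0$ via the substitutions $P_0(y)=Q_0(\rho(y-x_0))$ and $P_{-1}(y)=Q_{-1}(\tfrac{\rho}{2}(y-x_0))$. The $(\delta,2)$-regularity gives the pointwise estimate
\[
|\Delta_k(x)|\le \delta\sum_{n\ge 3}\frac{|x|^n}{2^n}=\delta\cdot\frac{(|x|/2)^3}{1-|x|/2},\qquad |x|<2,\ k\in\{-1,0\}.
\]
Fixing once and for all $L=\pi/2+1/10$, the right-hand side is at most $c\delta$ on $[-L,L]$ for an explicit absolute constant $c$; since $\delta\le\delta_1$ this is far below $\delta_0$.

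Next, I would locate the first zero of $Q_k$ on each side of $0$ inside $[-L,L]$. Because $Q_k(0)=P_k(x_0)=2>0$ and $Q_k(\pm L)=2\cos(\pm L)+\Delta_k(\pm L)<0$ (the cosine term dominates $c\delta$), the intermediate value theorem yields a smallest $a_k\in(0,L)$ and largest $b_k\in(-L,0)$ with $Q_k(a_k)=Q_k(b_k)=0$. The argument in the proof of Lemma \ref{zero} uses only the pointwise bound at the zero itself together with $|\sin y|\ge 2|y|/\pi$, so it applies verbatim and yields $|a_k-\tfrac{\pi}{2}|,|b_k+\tfrac{\pi}{2}|\le c'\delta$ with $c'=\pi c/4$. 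Setting
\[
y^+_0=x_0+\tfrac{a_0}{\rho},\ \ y^-_0=x_0+\tfrac{b_0}{\rho},\ \ y^+_{-1}=x_0+\tfrac{2a_{-1}}{\rho},\ \ y^-_{-1}=x_0+\tfrac{2b_{-1}}{\rho},
\]
the ordering $y^-_{-1}<y^-_0<x_0<y^+_0<y^+_{-1}$ follows from $a_0<2a_{-1}$ and $|b_0|<2|b_{-1}|$, both consequences of $3c'\delta<\pi/2$, which is immediate for $\delta\le\delta_1$. The positivity $P_k>0$ on $I_k^\pm$ is just the minimality of $a_k$ (resp.\ maximality of $b_k$) combined with $Q_k(0)=2>0$, transported back through the substitution.

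Finally, for the ratio,
\[
\frac{|I^+_{-1}|}{|I^+_0|}=\frac{2a_{-1}}{a_0}\le \frac{2(\pi/2+c'\delta)}{\pi/2-c'\delta},
\]
and the right-hand side is $\le 2.1$ as soon as $4.1\,c'\delta\le 0.05\pi$; the analogous bound for $|I^-_{-1}|/|I^-_0|$ is identical. The only real work is quantitative bookkeeping: making sure the explicit constant $c$ arising from the interval $[-L,L]$ together with the choice $\delta_1=5\times 10^{-4}$ really gives $c\delta<\delta_0$ (so that the Lemma \ref{zero} mechanism applies) and yields the final ratio below $2.1$. No conceptual obstacle is expected beyond verifying these numerical inequalities.
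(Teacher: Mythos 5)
Your proposal is correct and follows essentially the same route as the paper: renormalize to $Q_{-1},Q_0$, use the $(\delta,2)$-regularity to bound $|Q_k-2\cos x|$ on a fixed interval, invoke Lemma \ref{zero} (or its pointwise core) to pin the zeros near $\pm\pi/2$, pull back through the scalings $x\mapsto \rho(x-x_0)$ and $x\mapsto\tfrac{\rho}{2}(x-x_0)$, and compute the ratio $2t_{-1}/t_0\le 2.1$. The only cosmetic difference is your choice of working interval $[-L,L]$ with $L=\pi/2+0.1$ and explicit tracking of $c,c'$, whereas the paper uses $|x|\le 1.9$ and the crude bound $20\delta\le\delta_0$.
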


 \proof\
For $k=-1,0$, define $Q_k(x)$ as in \eqref{iterscale}.
By the assumption $(P_{-1},P_0)$ is $(\delta,2)$-regular at $x_0$, for $|x|<2$,
 $$
 |Q_{-1}(x)-2\cos x|, \ \ |Q_{0}(x)-2\cos x|\le \frac{|x|^3}{8-4|x|}\delta.
 $$
 Especially for $|x|\le 1.9$, we have
 $$
 |Q_{-1}(x)-2\cos x|, \ \ |Q_{0}(x)-2\cos x|\le \frac{1.9^3\delta}{8-4\cdot 1.9}<
 20\delta\le 0.01=\delta_0.
 $$
Let $t_0$ be the minimal $t\in (0,2)$ such that $Q_0(t)=0$,
$t_{-1}$ be the minimal $t\in(0,2)$  such that $Q_{-1}(t)=0$.
Then by Lemma \ref{zero},
$$
|t_0-\frac{\pi}{2}|\le \delta_0, \quad |t_{-1}-\frac{\pi}{2}|\le \delta_0,
$$
Define
$$
y^+_0=\frac{t_0}{a}+x_0,\quad  y^+_{-1}=\frac{2t_{-1}}{a}+x_0,
$$
we see $P_0(y^+_0)=P_{-1}(y^+_{-1})=0$ and
 $$
 \frac{|I^+_{-1}|}{|I^+_0|}=\frac{|y_{-1}^+-x_0|}{|y_0^+-x_0|}=\frac{2t_{-1}}{t_0}\le
 \frac{2(\frac{\pi}{2}+0.01)}{\frac{\pi}{2}-0.01}<2.1.
 $$
The proof of the  other part of the proposition is analogous.
 \hfill $\Box$

 Now we can state the main result in this subsection.

 \begin{prop}\label{1-1-regular}
Take any $\delta\le \delta_2$.
Assume  $(P_{-1},P_0)$ is
$(\delta,2)$-regular at $x_0$ with renormalization factor $a$, define $P_n=P_{n-2}^2(P_{n-1}-2)+2$ for $n\ge 1$.
Let  $y^+_0,y^-_0$ be defined as in Proposition \ref{ratio}, then
$(P_3,P_4)$ is $(1,1)$-regular at both $y^{+}_0$ and $y^-_0$.
\end{prop}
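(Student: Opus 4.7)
The plan is to promote the strong $(\delta_2,2)$-regularity of $(P_{-1},P_0)$ at $x_0$ into the weaker $(1,1)$-regularity of $(P_3,P_4)$ at the new base point $y_0^+$ by localizing the Thue--Morse iteration around $y_0^+$. Set
\[
\tilde\rho:=2\sqrt{2-P_1(y_0^+)}\,|P_0'(y_0^+)P_1(y_0^+)|,\qquad \mu:=16\rho/\tilde\rho,
\]
and $\tilde Q_k(x):=P_{k+4}(x/(2^k\tilde\rho)+y_0^+)$ for $k\ge-4$. An affine change of variable gives $\tilde Q_k(x)=Q_{k+4}(\mu x+2^{k+4}t_0)$, and one checks that $(\tilde Q_k)$ itself satisfies the Thue--Morse recursion; the target is $|\tilde\Delta_{-1,n}|,|\tilde\Delta_{0,n}|\le 1$ for $n\ge3$, where $\tilde\Delta_k:=\tilde Q_k-2\cos$.

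First I would iterate Proposition \ref{model} (tracking an explicit factor of $\delta$ as in the proof of Proposition \ref{control-coefficient}) to get $|\Delta_{k,n}|\le C\delta\cdot 2^{-n}$ for $k\le 4$ and $n\ge3$. Combining with Lemma \ref{zero} and \eqref{iter} yields $|t_0-\pi/2|,|P_1(y_0^+)+2|,|P_0'(y_0^+)+2\rho|,|\tilde\rho-16\rho|,|\mu-1|=O(\delta)$; in particular $2-P_1(y_0^+)>0$, so $\tilde\rho$ is well-defined. Then I verify that the constant, linear, and quadratic Taylor coefficients of $\tilde Q_{-1},\tilde Q_0$ at $0$ match those of $2\cos x$ exactly: $P_3(y_0^+)=P_4(y_0^+)=2$ and $P_3'(y_0^+)=P_4'(y_0^+)=0$ follow from $P_0(y_0^+)=0$ and two applications of \eqref{iter}, while the quadratic terms match by the calibration of $\tilde\rho$ via \eqref{fkx}.

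To bound $|\tilde\Delta_{k,n}|$ for $n\ge3$, write $\epsilon:=t_0-\pi/2$, $s_k:=2^{k+4}\epsilon$, and split
\[
\tilde\Delta_k(x)=\bigl[\tilde Q_k(x)-2\cos(x+s_k)\bigr]+\bigl[2\cos(x+s_k)-2\cos x\bigr].
\]
The second bracket's coefficients are $O(\delta/n!)$ by the sum-to-product identity. For $\hat\Delta_k:=\tilde Q_k-2\cos(\cdot+s_k)$, the relations $s_{k-1}=s_k/2$ and $s_{k-2}=s_k/4$ make $(\hat\Delta_k)$ satisfy exactly the shifted recursion \eqref{delta-k-x-1} with parameter $s_k$. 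Starting from $O(\delta)$ coefficient bounds on $(\hat\Delta_{-3},\hat\Delta_{-2})$, I apply Proposition \ref{model2} twice to obtain $|\hat\Delta_{-1,n}|,|\hat\Delta_{0,n}|\le 152^2 C'\delta\cdot 2^{-n}$ for an absolute $C'$; with $\delta_2=10^{-10}$ this is far below $1$.

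The hardest parts are (a) handling the sign flip $\tilde Q_{-3}\approx-2\cos(\mu x+2\epsilon)$, which I absorb into the choice $s_{-3}=\pi+2\epsilon$ using $2\cos(y+\pi+u)=-2\cos(y+u)$, and (b) producing the initial coefficient bounds on $\hat\Delta_{-3},\hat\Delta_{-2}$: one must control the Taylor coefficients in $x$ of $\Delta_1(\mu x+2t_0)$ and $\Delta_2(\mu x+4t_0)$, where $2t_0\approx\pi$ and $4t_0\approx2\pi$ lie outside the natural radius of the power series $\sum\Delta_{j,n}y^n$. I would handle this via Cauchy estimates on small complex discs around $2t_0,4t_0$, controlled recursively through the factorization $\Delta_1(y)=\Delta_{-1}(y/4)[Q_{-1}(y/4)+2\cos(y/4)](Q_0(y/2)-2)+4\cos^2(y/4)\Delta_0(y/2)$, which keeps the relevant arguments of $\Delta_{-1},\Delta_0$ inside their radii of convergence. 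The threshold $\delta_2=10^{-10}$ is then chosen to absorb the cumulative factors of $152$ and $C,C'$ with room to spare.
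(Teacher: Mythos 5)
Your overall plan is the paper's: pass to a shifted error series, show it satisfies (a variant of) the recursion \eqref{delta-k-x-1}, iterate Proposition \ref{model2}, and check by hand that the constant, linear, and quadratic Taylor coefficients are calibrated. Where you differ is in the choice of auxiliary quantity, and that choice is exactly what creates the step you leave as a sketch. You set $\hat\Delta_k(y)=\tilde Q_k(y)-2\cos(y+s_k)$ with $s_k$ reduced mod $\pi$, which is the natural target for $k=-1,0$, but it forces you to start the iteration at $\hat\Delta_{-3},\hat\Delta_{-2}$: for $k=-4,-5$ the renormalized iterates are close to $-2\sin$ resp.\ $\sqrt2+\cdots$, not $\pm 2\cos$ of a small argument, so your splitting degenerates there. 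Starting at $k=-3,-2$ means you must read off coefficients of $\Delta_1,\Delta_2$ at $2t_0\approx\pi$ and $4t_0\approx 2\pi$, outside the disc $|y|<2$ on which $(\delta,2)$-regularity directly controls $|\Delta_{j,n}|$; that is why you need the Cauchy estimates and ``the factorization,'' which you only outline. The paper's decomposition sidesteps this: it defines $\tilde\Delta_k(y):=\Delta_k(y+2^kt_0)$, i.e.\ it keeps the \emph{full} translation $2^kt_0$ inside the cosine rather than reducing it. Then $(\tilde\Delta_{-1},\tilde\Delta_0)$ correspond to translations $t_0/2\approx\pi/4$, $t_0\approx\pi/2$, both safely inside the disc, and the initial bound $|\tilde\Delta_{-1}|^\ast,|\tilde\Delta_0|^\ast\preceq M_0\delta\sum x^n/\beta^n$ follows from the elementary derivative estimates \eqref{derivatives}. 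The conversion to your $\tilde Q_k-2\cos$ is then done once, at the very end, after four applications of Proposition \ref{model2}. Notice that your ``factorization'' of $\Delta_1$ (and the one you would need for $\Delta_2$) is just two applications of \eqref{delta-k-x} in disguise, so you are re-deriving by hand the two iterations that you tried to skip; the paper's organization does the same amount of recursion work but never has to leave the disc.

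Two smaller points. First, you carry the scale $\mu=16a/\tilde\rho$ through the whole argument, which makes every initial bound pick up a correction $2\cos(\mu y+c)-2\cos(y+c)$; you correctly note that $|\mu-1|=O(\delta)$, but this is avoidable -- the paper iterates with the old scale $x/a$ and only rescales by $x\mapsto ax/(2^{k-3}\rho)$ in the final line, where it simply shrinks the coefficients geometrically. Second, your handling of the sign flip is right in substance: choosing $s_{-3}=\pi+2\epsilon$ forces $s_{-2}=2\pi+4\epsilon$, $s_{-1}=4\pi+8\epsilon$ (so that $s_{k-1}=s_k/2$ still holds exactly), the middle factor in the recursion then carries $-4\cos$ in place of $4\cos$, and this is harmless because the estimate \eqref{cosine}, and hence Proposition \ref{model2}, is sign-insensitive. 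So your route would close once the Cauchy/factorization step is carried out explicitly; but that step is precisely what the paper's choice of $\tilde\Delta_k$ is designed to make unnecessary.
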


\begin{proof}
  Recall that we have defined
  \begin{equation}\label{q-k}
Q_k(x)=P_k(\frac{x}{2^ka}+x_0)=:2\cos x+\Delta_k(x),\quad k=-1,0.
\end{equation}
Then
$$
|\Delta_{-1}(x)|^\ast \preceq \delta\sum_{n=3}^\infty \frac{x^n}{2^n},\quad
|\Delta_0(x)|^\ast \preceq \delta\sum_{n=3}^\infty \frac{x^n}{2^n}.
$$
Consequently for $x\in(0,2) $ it is ready to show that
 \begin{equation}\label{derivatives}
 \begin{cases}
 |\Delta_0^{(n)}(x)|\\
  |\Delta_{-1}^{(n)}(x)|
 \end{cases}
 \le \delta \left(\sum_{n=3}^\infty\frac{x^n}{2^n}\right)^{(n)}=
 \frac{\delta}{4}\left(\frac{x^3}{2-x}\right)^{(n)}=
 \begin{cases}
 \frac{\delta}{4}\left(\frac{x^3}{2-x}\right)& n=0\\
 -\frac{\delta(x+1)}{2}+\frac{2\delta}{(2-x)^2}& n=1\\
-\frac{\delta}{2}+\frac{4\delta}{(2-x)^3} & n=2\\
 \frac{2\delta n!}{(2-x)^{n+1}}& n\ge 3.
 \end{cases}
 \end{equation}

Let $t_0$ be the minimal $t\in (0,2)$ such that $Q_0(t)=0$.
Then
\begin{equation}\label{est-t0}
P_0(t_0/a+x_0)=0,\quad |t_0-\frac{\pi}{2}|\le 0.01.
\end{equation}
Consequently  $y^+_0=t_0/a+x_0$.  We have
$$
P_{-1}(y_0^+)=Q_{-1}(\frac{t_0}{2})=2\cos \frac{t_0}{2}+\Delta_{-1}(\frac{t_0}{2}).
$$
By \eqref{derivatives} and \eqref{est-t0} we get $|P_{-1}(y_0^+)-\sqrt{2}|\le 0.03.$ Since 
$$
P_{1}(y_0^+)=P_{-1}^2(y_0^+)(P_0(y_0^+)-2)+2,
$$ 
we conclude that
\begin{equation}\label{P1}
|P_1(y_0^+)+2|\le 0.2.
\end{equation}
By \eqref{q-k} we have $P_0^\prime(y_0^+)=aQ_0^\prime(t_0)=a(-2\sin t_0+\Delta_0^\prime(t_0))$. By \eqref{derivatives} and \eqref{est-t0} we get
\begin{equation}\label{P0}
\left|\frac{P_0^\prime(y_0^+)}{a}+2\right|\le 0.2.
\end{equation}
By \eqref{fkx} and \eqref{P1}, \eqref{P0}, for $k=3,4$ we have
$$
P_k(x)=2-(2^{k-3}\rho)^2(x-y_0^+)^2 +O((x-y_0^+)^3)
$$
with
$$
\rho=\sqrt{2-P_1(y_0^+)}|P_0^\prime (y_0^+)P_1(y_0^+)|\ge 6a.
$$
For $k\ge 3$, if we define $\tilde Q_k(x):= P_k(\frac{x}{2^{k-3}\rho}+y_0^+),$ then
\begin{equation}\label{tildeqk}
\tilde Q_k(x)=2-x^2+O(x^3)=2\cos x+O(x^3)=: 2\cos x +\bar \Delta_k(x).
\end{equation}
We need to show that
$$
|\bar\Delta_3(x)|^\ast, |\bar\Delta_4(x)|^\ast \preceq \sum_{n\ge 3} x^n.
$$

For $k\ge -1$ define $\tilde \Delta_k(2^kx):= \Delta_k(2^k(x+t_0))$. Then
\begin{equation}\label{pk-1}
 P_k(\frac{x}{a}+y_0^+)=P_k(\frac{x+t_0}{a}+x_0)=:2\cos(2^k(x+t_0))+\tilde\Delta_k(2^k x).
\end{equation}

By the recurrence relation of $P_k$, it is ready to show that $(\tilde \Delta_k(x))_{k\ge -1}$ satisfies \eqref{delta-k-x-1}. We have
 $$
 \begin{cases}
 \tilde \Delta_0(x)&=\Delta_0(x+t_0)=\sum_{n=0}^\infty \frac{\Delta_0^{(n)}(t_0)}{n!}x^n\\
 \tilde \Delta_{-1}(x)&=\Delta_{-1}(x+t_0/2)=\sum_{n=0}^\infty \frac{\Delta_{-1}^{(n)}(t_0/2)}{n!}x^n.
 \end{cases}
$$
Write $\beta=2-\pi/2-0.01=0.419\cdots$ and $M_0=10.$ By \eqref{derivatives} and \eqref{est-t0} we get
$$
 |\tilde \Delta_0(x)|^\ast,  |\tilde \Delta_{-1}(x)|^\ast\preceq M_0\delta\sum_{n=0}^\infty \frac{x^n}{\beta^n}.
$$
Since $\delta\le \delta_2=10^{-10}$, we have $152^3M_0\delta<1.$ By Proposition \ref{model2}
$$
|\tilde \Delta_1(x)|^\ast\preceq 152 M_0\delta\sum_{n=0}^\infty \frac{x^n}{(2\beta)^n}.
$$
Consequently we have
$$|\tilde\Delta_0(x)|^\ast\preceq 152M_0\delta\sum_{n=0}^\infty \frac{x^n}{\beta^n},\quad
|\tilde \Delta_1(x)|^\ast\preceq 152 M_0\delta\sum_{n=0}^\infty \frac{x^n}{\beta^n}.
$$
Again by Proposition \ref{model2} we get
$$|\tilde \Delta_2(x)|^\ast\preceq 152^2 M_0\delta\sum_{n=0}^\infty \frac{x^n}{(2\beta)^n}.
$$
Continue this process, since $152^3 M_0\delta<1$ and $2\beta<1$,  for $k=3,4$ we get
$$|\tilde \Delta_k(x)|^\ast\preceq 152^k M_0\delta\sum_{n=0}^\infty \frac{x^n}{(4\beta)^n}.$$
By these inequalities and \eqref{pk-1}, \eqref{cosine}, we have, for $k=3,4$,
$$
|P_k(\frac{x}{a}+y_0^+)|^\ast\preceq \sum_{n\ge0}\frac{(2^kx)^n}{(4\beta)^n}
\left(\frac{2(4\beta)^n}{n!}+152^kM_0\delta\right).
$$
Notice that $\tilde Q_k(x)=P_k(\frac{ax/2^{k-3}\rho}{a}+y_0^+)$ and $\rho\ge 6a$, we get
$$
|\tilde Q_k(x)|^\ast\preceq \sum_{n\ge0}\frac{(4x/3)^n}{(4\beta)^n}
\left(\frac{2(4\beta)^n}{n!}+152^kM_0\delta\right).
$$
Combine with \eqref{tildeqk} we have
$$\begin{array}{rcl}
|\tilde Q_k(x)-2+x^2|^\ast&\preceq&
\sum_{n\ge3}\frac{(4x/3)^n}{(4\beta)^n}
\left(\frac{2(4\beta)^n}{n!}+152^kM_0\delta\right)\\
|\tilde Q_k(x)-2\cos x|^\ast&\preceq&
\sum_{n\ge3}\frac{(4x/3)^n}{(4\beta)^n}
\left(\frac{2(4\beta)^n}{n!}+152^kM_0\delta\right)+
\sum_{n\ge4}\frac{2x^n}{n!}.
\end{array}$$
Now by a direct computation, for $k=3,4$,
$$
|\bar\Delta_k(x)|^\ast=|\tilde Q_k(x)-2\cos x|^* \preceq
\sum_{n\ge3}x^n.$$
This prove   the result for $y_0^+$. The proof for $y_0^-$ is the same.
\end{proof}


\subsection{ Control the distance between the base points of  germs }\

In this subsection we will show that besides the birth of new germs, we can even control the distance between the base points of the old and new germs once we iterate sufficient long time.

At first we define a big absolute integer. Let  $n_\alpha=40,$ then $9\alpha^{n_\alpha-3}\le \delta_1$. Define $\delta_3:=30^{-n_\alpha}/4000.$
Choose an absolute constant $K\in\mathbb{N}$ such that
\begin{equation}\label{def-K}
K\ge n_\alpha+4,\ \ \
9\alpha^{K-7}<\delta_2,\ \ \
C_6\alpha^{K-4}\le \delta_3,
\end{equation}
where $C_6$ is an absolute constant defined in Theorem  \ref{exp-cvge}.

We need one more lemma.

\begin{lem}\label{n-alpha}
Fix $N\ge 2$ and  $\delta>0$. Assume
$\varphi_0(x),\varphi_1(x)$ are two polynomials satisfying
$$
\begin{cases}
|\varphi_0(x)-2\cos 8x|\le \delta & x\in[-\pi,\pi]\\
|\varphi_1(x)-2\cos 16x|\le \delta & x\in[-\pi/2,\pi/2].
\end{cases}
$$
Define $\varphi_{n}=\varphi_{n-2}^2(\varphi_{n-1}-2)+2$ for $n\ge 2.$
Then for any $2\le n\le N$ and $x\in[-\pi/2^n,\pi/2^n]$
\begin{equation}\label{err}
|\varphi_n(x)-2\cos 2^{n+3}x|\le 30^{n-1}\delta.
\end{equation}
\end{lem}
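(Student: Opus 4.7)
The plan is to set $c_n(x) := 2\cos 2^{n+3}x$ and prove by induction on $n$ that $\Delta_n := \varphi_n - c_n$ satisfies the claimed bound. The first step is to observe that $c_n$ itself obeys the Thue--Morse recurrence: using the Chebyshev identity $(2\cos\theta)^2 = 2 + 2\cos 2\theta$ twice, one checks
$$2\cos 4\theta = (2\cos\theta)^2(2\cos 2\theta - 2) + 2,$$
and substituting $\theta = 2^{n+1}x$ gives $c_n = c_{n-2}^2(c_{n-1} - 2) + 2$ for $n \ge 2$.

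Subtracting the two recurrences and factoring $a^2 - b^2 = (a-b)(a+b)$ produces the key identity
$$\Delta_n = \Delta_{n-2}\,(\varphi_{n-2} + c_{n-2})(\varphi_{n-1} - 2) + c_{n-2}^2\,\Delta_{n-1}.$$
Since $|c_k| \le 2$ pointwise and hence $|\varphi_{n-2}| \le 2 + |\Delta_{n-2}|$ and $|\varphi_{n-1} - 2| \le 4 + |\Delta_{n-1}|$, this yields the recursive estimate
$$|\Delta_n(x)| \le |\Delta_{n-2}(x)|\bigl(4 + |\Delta_{n-2}(x)|\bigr)\bigl(4 + |\Delta_{n-1}(x)|\bigr) + 4|\Delta_{n-1}(x)|.$$

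With this inequality in hand I would run induction on $n$ from $n=2$ up to $N$, exploiting the nested inclusions $[-\pi/2^n,\pi/2^n] \subset [-\pi/2^{n-1},\pi/2^{n-1}]$ so that the hypotheses on $\Delta_{n-2}$ and $\Delta_{n-1}$ remain valid on the smaller interval. The base case $n=2$ follows from the hypotheses on $\varphi_0,\varphi_1$ directly: $|\Delta_2| \le \delta(4+\delta)^2 + 4\delta$, which is $\le 30\delta$ once $\delta$ is small enough. For the inductive step, plugging $|\Delta_{n-2}|\le 30^{n-3}\delta$ and $|\Delta_{n-1}|\le 30^{n-2}\delta$ into the recursive bound gives
$$|\Delta_n| \le 30^{n-3}\delta\,\bigl(4+30^{n-3}\delta\bigr)\bigl(4+30^{n-2}\delta\bigr) + 4\cdot 30^{n-2}\delta,$$
and the desired bound $30^{n-1}\delta$ follows from the numerical inequality $25/900 + 4/30 < 1$ after absorbing the small cross terms.

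The main technical point is that the argument implicitly requires $\delta$ to be small enough (essentially $30^{N-1}\delta \lesssim 1$) so that the quadratic corrections $(4+|\Delta_k|)$ stay close to $4$; otherwise the doubly-exponential growth built into the Thue--Morse recurrence would overwhelm the single-exponential factor $30^{n-1}$. In the applications to come this is automatic, since $\delta$ will be taken extremely small (e.g.\ $\delta_3 = 30^{-n_\alpha}/4000$). Apart from bookkeeping the smallness condition, the proof is a routine induction — there is no genuine obstacle beyond choosing $30$ as the step constant, and indeed any constant strictly greater than $4 + 16/30$ would work.
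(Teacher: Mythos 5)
Your argument is essentially identical to the paper's: after unwinding the notation, your factored identity
$$\Delta_n = \Delta_{n-2}\,(\varphi_{n-2}+c_{n-2})(\varphi_{n-1}-2) + c_{n-2}^2\,\Delta_{n-1}$$
is precisely the paper's recursion (the analogue of \eqref{delta-k-x}, with $\varphi_{n-2}+c_{n-2} = 4\cos 2^{n+1}x + \Delta_{n-2}$ and $c_{n-2}^2 = 2+2\cos 2^{n+2}x$), and the base case $n=2$, the nested-interval bookkeeping, and the induction with step constant $30$ all match. Your observation that a smallness condition on $\delta$ is implicitly required is correct and worth noting: the paper's own proof silently assumes $\delta\le 1$ when it replaces $(4+\delta)^2$ by $25$, and a condition of the shape $30^{N-2}\delta\lesssim 1$ in the inductive step — harmless in the application since $\delta_3 = 30^{-n_\alpha}/4000$. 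One correction to your parenthetical: the smallest step constant $C$ for which the leading-order inequality $16\,C^{n-3}+4\,C^{n-2}\le C^{n-1}$ closes is obtained from $C^2 - 4C - 16 \ge 0$, giving $C = 2+2\sqrt{5}\approx 6.47$, not $4+16/30\approx 4.53$.
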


\proof\
Write $\varphi_n(x)=2\cos 2^{n+3}x+\Delta_n(2^{n+3}x)$ for $n\ge 0.$ It is equivalent to prove that for any $n\ge 2$
\begin{equation}\label{pro-equi}
|\Delta_n(x)|\le 30^{n-1}\delta\ \ \ (\forall x\in [-8\pi,8\pi]).
\end{equation}

We prove it by induction.
The condition implies that
\begin{equation}\label{delta01}
|\Delta_0(x)|, |\Delta_1(x)|\le \delta\ \  (x\in[-8\pi,8\pi]).
\end{equation}
By the recurrence relation, similar with  \eqref{delta-k-x}, for $n\ge 2$ we have
\begin{eqnarray*}
\Delta_n(2^{n+3}x)&=&(2+2\cos 2^{n+2}x)\cdot\Delta_{n-1}(2^{n+2}x)+\Delta_{n-2}(2^{n+1}x)\cdot\\
\nonumber&&\Big(4\cos 2^{n+1}x+\Delta_{n-2}(2^{n+1}x)\Big)
\Big(2\cos 2^{n+2}x-2+\Delta_{n-1}(2^{n+2}x)\Big).
\end{eqnarray*}
For any $x\in[-\pi/2^2,\pi/2^2]$, we have $2^3x,2^4x\in [-4\pi,4\pi]$. Thus
by \eqref{delta01}
$$
|\Delta_2(2^5x)|\le 4\delta+\delta(4+\delta)^2\le 4\delta+25\delta<30\delta,
$$
i.e., \eqref{pro-equi} holds for $n=2$.

Now assume  \eqref{err} holds for $n<m\le N$.  For any $x\in [-\pi/2^m,\pi/2^m]$, we have $2^{m+1}x, 2^{m+2}x\in [-4\pi,4\pi]$.
Then
\begin{eqnarray*}
|\Delta_m(2^{m+3}x)|&\le& 4|\Delta_{m-1}
(2^{m+2}x)|+|\Delta_{m-2}(2^{m+1}x)|\cdot\\
&&(4+|\Delta_{m-2}(2^{m+1}x)|)(4+|\Delta_{m-1}(2^{m+2}x)|)\\
&\le&4\cdot 30^{m-2}\delta+30^{m-3}\delta(4+30^{m-3}\delta)(4+30^{m-2}\delta)\\
&\le&4\cdot 30^{m-2}\delta+25\cdot 30^{m-2}\delta\le 30^{m-1}\delta.
\end{eqnarray*}
Thus \eqref{pro-equi} holds for $n=m.$
This proves the lemma.
\hfill $\Box$

\begin{prop}\label{key-lem}
Assume $(P_{-1},P_0)$ is $(1,1)$-regular at $\theta.$ Let $\theta^+>\theta $ be the minimal  zero of $P_{K-4}$ in $[\theta,\infty)$. Then $(P_{K-1},P_{K})$ is $(1,1)$-regular at $\theta^+.$ Moreover assume $\theta_+\in (\theta,\theta^+)$ is the maximal zero of $P_{2K-4},$ then
\begin{equation}\label{eq-0}
\frac{\theta^+-\theta_+}{\theta^+-\theta}\ge 2.1^{-K}.
\end{equation}
Similarly let $\theta_-<\theta $ be the maximal  zero of $P_{K-4}$ in $[-\infty,\theta)$. Then $(P_{K-1},P_{K})$ is $(1,1)$-regular at $\theta_-.$ Moreover assume $\theta^-\in (\theta_-,\theta)$ is the minimal zero of $P_{2K-4},$ then
$$
\frac{\theta^--\theta_-}{\theta-\theta_-}\ge 2.1^{-K}.
$$
\end{prop}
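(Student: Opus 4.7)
The plan is to handle the two assertions separately. For the regularity claim at $\theta^+$, I would simply chain together the existing propositions. Applying Proposition \ref{control-coefficient} to the $(1,1)$-regular pair $(P_{-1},P_0)$ at $\theta$ yields that $(P_{K-5},P_{K-4})$ is $(9\alpha^{K-7},2)$-regular at $\theta$. By the second inequality in \eqref{def-K} we have $9\alpha^{K-7}<\delta_2$, so Proposition \ref{1-1-regular} applies to $(P_{K-5},P_{K-4})$; after the four subsequent iterates its conclusion reads that $(P_{K-1},P_K)$ is $(1,1)$-regular at both $y_0^+$ and $y_0^-$, which are by definition $\theta^+$ and $\theta_-$. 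The claim at $\theta_-$ is identical.

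For the quantitative ratio \eqref{eq-0}, the plan is to locate $\theta^+$ and $\theta_+$ precisely via Theorem \ref{exp-cvge} and then reduce the ratio to a comparison of renormalization factors at the two base points. Let $a$ denote the renormalization factor of $(P_{-1},P_0)$ at $\theta$, and $\rho'$ that of $(P_{K-1},P_K)$ at $\theta^+$ produced by the first part. Reading the proof of Proposition \ref{1-1-regular} with $(P_{K-5},P_{K-4})$ playing the role of $(P_{-1},P_0)$ (so that the ambient renormalization factor is $2^{K-4}a$), the formula
$$\rho'=\sqrt{2-P_{K-3}(\theta^+)}\,|P_{K-4}'(\theta^+)P_{K-3}(\theta^+)|$$
together with the bounds \eqref{P1}, \eqref{P0} yields not only the stated $\rho'\ge 6\cdot 2^{K-4}a$ but also the matching upper bound $\rho'\le\sqrt{4.2}\cdot(2.2)^2\cdot 2^{K-4}a<10\cdot 2^{K-4}a\le(5/8)\cdot 2^K a$.

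I would then apply Theorem \ref{exp-cvge} with $m=6$ to both germs in parallel: since $K-4\ge 13$ by \eqref{def-K}, the rescaled polynomials $Q_{K-4}(x)=P_{K-4}(x/(2^{K-4}a)+\theta)$ and $\tilde Q_{K-4}(x)=P_{2K-4}(x/(2^{K-4}\rho')+\theta^+)$ satisfy
$$|Q_{K-4}(x)-2\cos x|,\ |\tilde Q_{K-4}(x)-2\cos x|\le C_6\alpha^{K-4}\le\delta_3$$
on $[-32\pi,32\pi]$, using the third inequality in \eqref{def-K}. Since $\delta_3\ll\delta_0$, Lemma \ref{zero} pins down the minimal positive zero of $Q_{K-4}$ at $\pi/2+O(\delta_3)$ and the maximal negative zero of $\tilde Q_{K-4}$ at $-\pi/2+O(\delta_3)$, so unwinding the scalings gives
$$\theta^+-\theta=\frac{\pi}{2^{K-3}a}(1+O(\delta_3)),\qquad \theta^+-\theta_+=\frac{\pi}{2^{K-3}\rho'}(1+O(\delta_3)).$$
The ratio therefore collapses to $(\theta^+-\theta_+)/(\theta^+-\theta)=(a/\rho')(1+O(\delta_3))\ge(8/5)\cdot 2^{-K}(1-o(1))$, which exceeds $2.1^{-K}$ comfortably for every $K\ge 1$ and in particular for our $K\ge 44$. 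The argument at $\theta_-,\theta^-$ is a mirror image and uses no new ingredient.

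The main obstacle I anticipate is the careful extraction of the \emph{upper} bound $\rho'\le 10\cdot 2^{K-4}a$ from the proof of Proposition \ref{1-1-regular}, since that proof explicitly records only the lower bound $\rho'\ge 6\tilde a$; however the inequalities \eqref{P1}, \eqref{P0} embedded in its proof supply the upper bound directly with small constants, so no genuinely new estimate is needed. All the remaining bookkeeping — the admissible intervals for Lemma \ref{zero} after rescaling, and the applicability of Theorem \ref{exp-cvge} with $m=6$ to both germs — is engineered into the defining inequalities \eqref{def-K} for $K$, so the error terms never threaten the target bound $2.1^{-K}$.
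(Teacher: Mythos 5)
Your proposal is correct in its essential logic, and it takes a genuinely different route from the paper's argument for the quantitative estimate \eqref{eq-0}. The paper factors the ratio as a telescoping product over the intermediate zeros $\theta_k$ of $P_{K+k}$ ($-1\le k\le K-4$), bounding each consecutive factor by $2.1^{-1}$ (and the first by $2.1^{-3}$); the delicate part there is the range $0\le k<n_\alpha$, where the germ at $\theta^+$ is only $(9\alpha^{k-3},2)$-regular, too crude for Proposition~\ref{ratio}, so the paper introduces Lemma~\ref{n-alpha} to transfer the trigonometric approximation. Your argument bypasses both the telescoping and Lemma~\ref{n-alpha} entirely: you compute the two endpoint lengths directly from the high-iterate approximations (Theorem~\ref{exp-cvge} with $m=6$ plus Lemma~\ref{zero}), reducing the ratio to $a/\rho'$ with a quantitatively controlled error, and then use a two-sided bound on $\rho'$ extracted from \eqref{P1}, \eqref{P0}. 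This is cleaner and exploits the simple structure of what needs to be proved (two lengths, not $K-2$ ratios); what it costs is the need for the upper bound on the new renormalization factor, which the paper's route never invokes.

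One small arithmetic slip worth flagging: the quantity you label $\rho'$ is really the $\rho$ appearing in the proof of Proposition~\ref{1-1-regular}, whereas the renormalization factor of the pair $(P_{K-1},P_K)$ at $\theta^+$ is twice that. Indeed, from $P_{K-1}(x)=2-\rho^2(x-\theta^+)^2+\cdots$ and $P_K(x)=2-(2\rho)^2(x-\theta^+)^2+\cdots$ compared against the germ normalization $P_{K-1}=2-\tfrac{\rho'^2}{4}(\cdot)^2+\cdots$, $P_K=2-\rho'^2(\cdot)^2+\cdots$, one reads off $\rho'=2\rho$. So the correct upper bound is $\rho'\le 2\sqrt{4.2}\cdot(2.2)^2\cdot 2^{K-4}a\approx 19.8\cdot 2^{K-4}a=\tfrac{5}{4}\cdot 2^{K}a$ rather than $\tfrac{5}{8}\cdot 2^Ka$. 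This halves your bound to $(\theta^+-\theta_+)/(\theta^+-\theta)\gtrsim \tfrac{4}{5}\cdot 2^{-K}$, which still dominates $2.1^{-K}=2^{-K}(2/2.1)^K$ with enormous margin for $K\ge 44$, so the conclusion is unaffected. With that correction, the proposal is a valid alternative proof.
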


\begin{proof} We only prove the first result, since the second one is the same.

Since $(P_{-1},P_0)$ is (1,1)-regular at $\theta$, by Proposition \ref{control-coefficient}, $(P_{K-5,K-4})$ is $(9\alpha^{K-7},2)$-regular at $\theta.$ By \eqref{def-K}, $(P_{K-5,K-4})$ is $(\delta_2,2)$-regular at $\theta.$ Then by Proposition \ref{1-1-regular}, $(P_{K-1},P_K)$ is $(1,1)$-regular at $\theta^+.$

For any $-1\le k\le K-4$ define $\theta_k$ to be the largest $y$ in
$[\theta,\theta^+]$ such that $P_{K+k}(y)=0$, then $\theta_{K-4}=\theta_+$. We have
$$
\frac{\theta^+-\theta_+}{\theta^+-\theta}=\frac{\theta^+-\theta_{-1}}{\theta^+-\theta}\cdot \prod_{k=0}^{K-4} \frac{\theta^+-\theta_{k}}{\theta^+-\theta_{k-1}}
$$

At first we estimate $(\theta^+-\theta_{-1})/(\theta^+-\theta).$

Assume $(P_{-1},P_0)$ has renormalization factor $a$ at $\theta.$
let $L_k(x)=\frac{x}{2^{k}a}+\theta$. By Theorem  \ref{exp-cvge}, for any $k\ge 13$ and any $x\in[-32\pi,32\pi]$ we have
\begin{equation}\label{expansion_a_empty}
|P_k\circ L_k(x)-2\cos x|\le C_6\alpha^k.
\end{equation}

By \eqref{expansion_a_empty} and \eqref{def-K}, for  $x\in[-4\pi,4\pi]$,
$$ 
|P_{K-4}\circ L_{K-4}(x)-2\cos x| \le \delta_3,\quad
|P_{K-1}\circ L_{K-4}(x)-2\cos 8x| \le  \delta_3.
$$ 
By Lemma \ref{zero} and Corollary \ref{ratio-change},
\begin{equation}\label{p34}
|L_{K-4}^{-1}(\theta^+)-\pi/2|\le\delta_3\le \delta_0,\quad
|(L_{K-4}^{-1}(\theta^+)-L_{K-4}^{-1}(\theta_{-1}))-\pi/16|\le  2\delta_3\le \delta_0.
\end{equation}
Since $L_{k}^{-1}x=2^ka(x-\theta)$ we conclude that
\begin{equation}\label{eq1}
\frac{\theta^+-\theta_{-1}}{\theta^+-\theta}\ge 2.1^{-3}.
\end{equation}

Next we  we estimate $(\theta^+-\theta_{k})/(\theta^+-\theta_{k-1})$ for $k=0,\cdots, K-4.$ We will discuss two cases. We have shown that $(P_{K-1},P_K)$ is $(1,1)$-regular at $\theta^+.$ By Proposition \ref{control-coefficient}, $(P_{K+k-1},P_{K+k})$ is $(9\alpha^{k-3},2)$-regular.

In the following, we prove 
$\frac{\theta^+-\theta_{k}}{\theta^+-\theta_{k-1}}\ge 2.1^{-1}$ in two cases:
$$n_\alpha\le k\le K-4 \quad\mbox{and}\quad 0\le k<n_\alpha.$$

Case i: $n_\alpha\le k\le K-4.$

Recall that  $n_\alpha$ is such that $9\alpha^{n_\alpha-3}\le \delta_1$. In this case $(P_{K+k-1},P_{K+k})$ is $(\delta_1,2)$-regular, then by
Proposition \ref{ratio},
\begin{equation}\label{eq2}
\frac{\theta^+-\theta_{k}}{\theta^+-\theta_{k-1}}\ge 2.1^{-1}.
\end{equation}

Case ii: $0\le k<n_\alpha.$
By \eqref{expansion_a_empty} and \eqref{def-K},
$$
\begin{cases}
|P_{K-1}\circ L_{K-4}(x)-2\cos 8x| \le  \delta_3 & x\in[-4\pi,4\pi]\\
|P_{K}\circ L_{K-4}(x)-2\cos 16x| \le  \delta_3 & x\in[-2\pi,2\pi].
\end{cases}$$

Let $x^*=L_{K-4}^{-1}(\theta^+)$,
define $\xi_k(x):=P_{K+k-1}\circ L_{K-4}(x+x^\ast)$ for $k\ge 0$.

For $x\in[-\pi,\pi]$,
\begin{eqnarray*}
&&|\xi_{0}(x)-2\cos 8x|\\&=&|P_{K-1}\circ L_{K-4}(x+x^\ast)-2\cos 8x|\\
&\le&|P_{K-1}\circ L_{K-4}(x+x^\ast)-2\cos 8(x+x^\ast)|+|2\cos 8(x+x^\ast)-2\cos 8x|\\
&\le& \delta_3+4|\sin 4x^\ast|= \delta_3+4|\sin 4(x^\ast-\frac{\pi}{2})|\le 17\delta_3\le 30^{-n_\alpha}/100,
\end{eqnarray*}
where the third inequality is due to \eqref{p34}, and the last inequality is by definition
of $\delta_3$.

Similarly we have  for $x\in[-\pi/2,\pi/2]$
\begin{eqnarray*}
&&|\xi_1(x)-2\cos 16x|\\
&=&|P_{K}\circ L_{K-4}(x+x^\ast)-2\cos 16x|\\
&\le&|P_{K}\circ L_{K-4}(x+x^\ast)-2\cos 16(x+x^\ast)|+|2\cos 16(x+x^\ast)-2\cos 16x|\\
&\le& \delta_3+4|\sin 8x^\ast|\le \delta_3+4|\sin 8(x^\ast-\frac{\pi}{2})|\le 33\delta_3\le 30^{-n_\alpha}/100.
\end{eqnarray*}
By Lemma \ref{n-alpha}, For any $2\le k\le n_\alpha$ and  $x\in[-\pi/2^{k},\pi/2^{k}]$,
$$
|\xi_k(x)-2\cos 2^{k+3}x|\le 30^{k-1}\cdot 30^{-n_\alpha}/100\le 1/100=\delta_0.
$$
Notice that for all $-1\le k<n_\alpha,$
$L_{K-4}^{-1}(\theta_{k})-x^*=2^{K-4}a(\theta_{k}-\theta^+)$ is the largest zero point of $\xi_{k+1}$ in
$[-\frac{\pi}{2^{k+4}},0]$.
Then by Corollary \ref{zero-1},
$$
|2^{K-4}a(\theta_{k}-\theta^+)+\frac{\pi}{2^{k+5}}|\le \frac{1}{100\cdot 2^{k+4}}\le
\frac{1}{100}\frac{\pi}{2^{k+5}}.
$$
Consequently, for any integer $0\le k<n_\alpha$,
\begin{equation}\label{eq3}
\frac{\theta^+-\theta_k}{\theta^+-\theta_{k-1}}\ge 2.1^{-1}.
\end{equation}
Combining \eqref{eq1},\eqref{eq2} and \eqref{eq3} we get \eqref{eq-0}.
\end{proof}


\section{Lower bound for the Hausdorff dimension of the spectrum}\label{sec-4}

Now we go back to the spectrum of the Thue-Morse Hamiltonian. Recall that  $\{h_n(x):n\ge 1\}$ is the related trace polynomials and  $h_1(x)=x^2-\lambda^2-2.$ Let $a_\emptyset=\sqrt{2+\lambda^2},$ then  $h_1(a_\emptyset)=0.$

At first we will show that $(h_4,h_5)$ is $(1,1)$-regular at $a_\emptyset,$ then by Proposition \ref{key-lem} we will construct a Cantor subset of the spectrum around $a_\emptyset$ and estimate the dimension of the Cantor set. Consequently we get a lower bound for the Hausdorff dimension of the spectrum.

\subsection{$(h_4,h_5)$ is $(1,1)$-regular at $a_\emptyset$}\

Recall that $h_2(x)=(x^2-\lambda^2)^2-4x^2+2,$
 then
$h_2(a_\emptyset)=-2-4\lambda^2.$  Thus
$$
\begin{cases}
h_1(x)=h_1^\prime(a_\emptyset)(x-a_\emptyset)+O((x-a_\emptyset)^2)=
2a_\emptyset(x-a_\emptyset)+O((x-a_\emptyset)^2)\\
h_2(x)=h_2(a_\emptyset)+O((x-a_\emptyset))=-2(1+2\lambda^2)+O((x-a_\emptyset)).
\end{cases}
$$
Write $\rho:=(1+2\lambda^2)\sqrt{(1+\lambda^2)(2+\lambda^2)}$. By using the recurrent relation
we get
$$
h_k(x)=2-4^{k-1}\rho^2(x-a_\emptyset)^2+O((x-a_\emptyset)^3)\ \ (k\ge 4).
$$

\begin{lem}\label{initial-condition}
$(h_4,h_5)$ is $(1,1)$-regular at $a_\emptyset$ with renormalization factor $2^4\rho$.
\end{lem}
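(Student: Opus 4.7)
The plan is to verify the two parts of the $(\delta,\beta)$-regularity definition separately. The germ structure is immediate from the calculation preceding the lemma: starting from $h_1(a_\emptyset)=0$ and $a_\emptyset^2=2+\lambda^2$, and iterating the recurrence, one obtains $h_k(x)=2-4^{k-1}\rho^2(x-a_\emptyset)^2+O((x-a_\emptyset)^3)$ for $k\ge 4$. Setting $\tilde\rho:=2^4\rho$ and specializing to $k=4,5$ gives $h_4(x)=2-(\tilde\rho/2)^2(x-a_\emptyset)^2+\cdots$ and $h_5(x)=2-\tilde\rho^2(x-a_\emptyset)^2+\cdots$, exactly matching the definition of a $\tilde\rho$-germ at $a_\emptyset$ with $P_{-1}=h_4$, $P_0=h_5$. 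In particular this confirms that both $Q_{-1}(y):=h_4(y/(8\rho)+a_\emptyset)$ and $Q_0(y):=h_5(y/(16\rho)+a_\emptyset)$ agree with $2\cos y$ through order two, so $\Delta_{-1}$ and $\Delta_0$ vanish to order three as required.

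For the regularity part it remains to verify $|Q_k-2\cos y|^\ast\preceq\sum_{n\ge 3}y^n$ for $k=-1,0$, i.e., $|\Delta_{k,n}|\le 1$ for every $n\ge 3$. Since $h_4,h_5$ are polynomials of degree $16$ and $32$, the $Q_k$ are polynomials in $y$ of the same degrees, and for $n>32$ one has $\Delta_{0,n}=-2\cos^{(n)}(0)/n!$, whose modulus is at most $1/n!\le 1$; similarly for $\Delta_{-1}$. Thus only finitely many $y^n$-coefficients need to be controlled. I would compute them by iterating the recurrence starting from $h_1(a_\emptyset+w)=2a_\emptyset w+w^2$ and $h_2(a_\emptyset+w)=-2-4\lambda^2+(8+4\lambda^2)w^2+4a_\emptyset w^3+w^4$ and applying $h_{n+1}=h_{n-1}^2(h_n-2)+2$ three more times, tracking coefficient sizes via the $|\cdot|^\ast$ calculus of Section~\ref{sec-2}. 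The $y^n$-coefficient of $Q_k$ equals the $w^n$-coefficient of $h_{k+5}(a_\emptyset+w)$ divided by $(8\rho)^n$ (resp.\ $(16\rho)^n$), so $\Delta_{k,n}$ is explicit.

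The main obstacle is establishing $|\Delta_{k,n}|\le 1$ \emph{uniformly in $\lambda\in\R$}, since the raw $w^n$-coefficients of $h_4$ and $h_5$ are polynomials in $\lambda$ (and $a_\emptyset=\sqrt{2+\lambda^2}$) that grow with $|\lambda|$. The rescaling factor $\rho=(1+2\lambda^2)\sqrt{(1+\lambda^2)(2+\lambda^2)}$ grows like $|\lambda|^3$ as $|\lambda|\to\infty$, while each application of the recurrence involves at most quadratic $\lambda$-growth (from the $h_2$-type factor), so the asymptotic comparison $|\lambda|^{2n}\ll\rho^n$ is comfortable; the delicate regime is small and moderate $|\lambda|$. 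I would exploit the factorizations $h_3-2=h_1^2(h_2-2)$ and $h_5-2=h_3^2(h_4-2)$, which force second-order vanishing at $a_\emptyset$ in $w$ and thereby absorb two powers of $1/\rho$ from each rescaling, and then bound each factor's $|\cdot|^\ast$-norm separately, using $|fg|^\ast\preceq|f|^\ast|g|^\ast$ as in Propositions~\ref{model} and~\ref{model2}. Because the total degree is finite (at most $32$ in $w$), the verification reduces to a finite list of explicit polynomial inequalities in $\lambda$, each of which can be checked directly; the value $\tilde\rho=2^4\rho$ rather than $2^3\rho$ is what gives the extra room needed to absorb the $\lambda$-dependent constants into the bound $\delta=\beta=1$.
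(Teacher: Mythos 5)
Your plan matches the paper's strategy: shift to $w=x-a_\emptyset$, rescale, and control the coefficients with the $|\cdot|^\ast$ calculus, reducing to finitely many coefficient bounds because $h_4,h_5$ have bounded degree. You also correctly isolate uniformity over all $\lambda\in\R$ as the central difficulty. But your proposal stops exactly at the point where the work begins: you assert that the verification ``reduces to a finite list of explicit polynomial inequalities in $\lambda$\,\dots\,each of which can be checked directly,'' without producing those inequalities or performing the checks. The paper's proof hinges on choosing the right parametrization: it sets $t=2a_\emptyset\ge 2\sqrt 2$ and $\tau=t(t^2-6)\sqrt{t^2-4}=8\rho$, which makes $t^2-6\ge 2$, $t^2-4\ge 4$ and $t\tau\ge 32$, and from there establishes explicit majorants such as $|f_1|^\ast\preceq\frac{t}{\tau}xe^{x/32}$, $|f_2|^\ast\preceq(t^2-6)e^{x/4}$, $|f_2-2|^\ast\preceq(t^2-4)e^{x/4}$, which cancel cleanly when propagated through $f_n=2+f_{n-2}^2(f_{n-1}-2)$ to bound $|f_4|^\ast$ and $|f_5|^\ast$. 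Without an analogue of this monotone-in-$t$ bookkeeping the uniform-in-$\lambda$ claim is unverified, so what you wrote is an outline rather than a proof.

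A secondary inaccuracy: the remark that $\tilde\rho=2^4\rho$ rather than $2^3\rho$ ``gives the extra room'' misreads the role of this constant. Once $(P_{-1},P_0)=(h_4,h_5)$ is fixed, the renormalization factor is \emph{forced} by the $\rho$-germ definition together with $h_k(x)=2-4^{k-1}\rho^2(x-a_\emptyset)^2+O((x-a_\emptyset)^3)$ for $k\ge 4$; matching second-order coefficients gives $\tilde\rho^2/4=4^3\rho^2$ and $\tilde\rho^2=4^4\rho^2$, hence $\tilde\rho=2^4\rho$ and nothing else. Nor is $(h_3,h_4)$ a viable starting pair: one computes $h_3(x)=2-16(1+\lambda^2)(2+\lambda^2)(x-a_\emptyset)^2+\cdots$, whose leading coefficient differs from $4^2\rho^2$ by the factor $(1+2\lambda^2)^2$, so the curvature ratio of $(h_3,h_4)$ at $a_\emptyset$ is $4(1+2\lambda^2)^2\ne 4$ for $\lambda\ne 0$ and the pair does not form a germ at all.
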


\begin{proof}
  Write $t:=2a_\emptyset$. Then $t\ge 2\sqrt{2}.$ Define
$$
g_n(x)=h_n(x+a_\emptyset),
$$
we have
$$
g_1(x)=x^2+t x\ \ \ \ \text{ and }\ \ \ \  g_2(x)=x^4+2t x^3+t^2 x^2+6-t^2.
$$
Then we can compute that for $n\ge 4$,
$$
g_n(x)=2-4^{n-4}t^2(t^2-6)^2(t^2-4) x^2+O(x^3).
$$
Write $\tau:= t(t^2-6)\sqrt{t^2-4}$. Define $f_n(x)=g_n(x/\tau)$. Then for $n\ge 4$ we have
$$
f_n(x)=2-4^{n-4}x^2+O(x^3).
$$
We also have
\begin{eqnarray*}
f_1(x)&=&{t}{\tau}^{-1}x+{\tau^{-2}}x^2\\
f_2(x)&=&(6-t^2)+(t/\tau)^2x^2+(2t/\tau^3)x^3+x^4/\tau^4\\
f_3(x)&=&2-x^2/(t^2-4)-O(x^3).
\end{eqnarray*}
By the fact that $t\ge 2\sqrt{2}$ and $\tau=t(t^2-6)\sqrt{t^2-4}$, it is direct to verify that
$$|f_1(x)|^*\preceq\frac{t}{\tau}xe^{x/32},\quad
|f_2(x)|^*\preceq(t^2-6)e^{x/4},\quad |f_2(x)-2|^*\preceq (t^2-4)e^{x/4}.
$$
Then, by $f_n=2+f_{n-2}^2(f_{n-1}-2)$, we have,
$$\begin{array}{l}
|f_3(x)|^*\preceq 2+\frac{1}{(t^2-6)^2}x^2e^{5x/16}\\
|f_4(x)|^*\preceq 2+x^2e^{13x/16}\\
|f_5(x)|^*\preceq 2+4x^2e^{13x/16}+4\frac{x^4}{(t^2-6)^2}e^{18x/16}+\frac{x^6}{(t^2-6)^4}e^{23x/16}.
\end{array}
$$
Now, it is ready to verify that
$$|f_4(x)-2\cos x|^*\preceq \sum_{n\ge3}x^n,\quad
|f_5(x/2)-2\cos x|^*\preceq \sum_{n\ge3}x^n.$$
This implies that $(h_4,h_5)$ is $(1,1)$-regular at $a_\emptyset$
with renormalization factor $2\tau=2^4\rho$.
\end{proof}

\subsection{Lower bound of the spectrum}\

Now we will construct the desired Cantor set. To simplify the notation we write $P_k(x):=h_{k+5}(x)$ for $k\ge -1.$ Then we have shown that $(P_{-1},P_0)$ is $(1,1)$-regular at $a_\emptyset.$

Fix an absolute constant $K\in\mathbb{N}$ according to \eqref{def-K}.
Assume $b_\emptyset>a_\emptyset$ is the first zero of $P_{K-4}$ to
the right of $a_\emptyset.$ Define $I_\emptyset:=[a_\emptyset,b_\emptyset]$.
Assume $b_0$ is the smallest zero of $P_{2K-4}$ in $I_\emptyset$ and $a_1$
is the biggest zero of $P_{2K-4}$ in $I_\emptyset.$ Define
$$
I_0:=[a_\emptyset,b_0]=[a_0,b_0]\ \ \ \  \text{ and } \ \ \ I_1:=[a_1,b_\emptyset]=[a_1,b_1].
$$
Take any $w\in\{0,1\}^k$, suppose $I_w=[a_w,b_w]$ is defined.
 Assume $b_{w0}$ is the smallest zero of $h_{(k+2)K-4}$ in $I_w$ and
 assume $a_{w1}$ is the biggest zero of $h_{(k+2)K-4}$ in $I_w$.
 Write $a_{w0}=a_w$, $b_{w1}=b_w$ and define
 $$
 I_{w0}=[a_w,b_{w0}]=[a_{w0}, b_{w0}]\ \ \ \text{ and }\ \ \  I_{w1}
 =[a_{w1},b_{w}]=[a_{w1}, b_{w1}].
 $$
 Define  a Cantor set as
 $$
 \mathcal C:=\bigcap_{n\ge 1}\bigcup_{|w|=n}I_w
 $$

 \begin{prop}
  $$
 \dim_H \mathcal C \ge \frac{\ln 2}{K\ln 2.1}.
 $$
 \end{prop}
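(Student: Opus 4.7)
The plan is to convert the geometric separation supplied by Proposition \ref{key-lem} into a Hausdorff dimension lower bound via the mass distribution principle. The key intermediate goal is to establish, by induction on $|w|$, the uniform separation estimate
\[
|I_{w0}|,\ |I_{w1}|\ \ge\ 2.1^{-K}\,|I_w|,
\]
along with the $(1,1)$-regularity (in the sense of Sections \ref{sec-2} and \ref{sec-3}) of the pairs $(P_{(|w|+2)K-1},P_{(|w|+2)K})$ at each of the four new endpoints, which keeps the induction going.

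For the base case, Lemma \ref{initial-condition} gives $(1,1)$-regularity of $(P_{-1},P_0)$ at $a_\emptyset$, and Proposition \ref{control-coefficient} upgrades this to $(1,1)$-regularity of every shifted pair $(P_{mK-1},P_{mK})$ at $a_\emptyset$. A first invocation of Proposition \ref{key-lem} at $\theta=a_\emptyset$ identifies $\theta^+=b_\emptyset$ and $\theta_+=a_1$, yielding $|I_1|/|I_\emptyset|\ge 2.1^{-K}$ and the $(1,1)$-regularity of $(P_{K-1},P_K)$ at $b_\emptyset$. A second invocation at $\theta=a_\emptyset$ with the shifted pair $(P_{K-1},P_K)$ identifies $b_0$ as the minimal $P_{2K-4}$-zero above $a_\emptyset$ and delivers the regularity of $(P_{2K-1},P_{2K})$ at $b_0$; the matching bound $|I_0|/|I_\emptyset|\ge 2.1^{-K}$ requires a small left-right variant of the argument of Proposition \ref{key-lem}. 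The second part of Proposition \ref{key-lem}, applied at $b_\emptyset$ with the pair $(P_{K-1},P_K)$, supplies the regularity of $(P_{2K-1},P_{2K})$ at $a_1$. The inductive step repeats this pattern with all indices shifted by $|w|K$.

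With $|I_w|\ge 2.1^{-K|w|}|I_\emptyset|$ in hand for every $w$, let $\mu$ be the probability measure on $\mathcal{C}$ determined by $\mu(I_w)=2^{-|w|}$, and set $\eta:=2.1^{-K}$ and $s:=\ln 2/\ln(1/\eta)=\ln 2/(K\ln 2.1)$. Given $x\in\mathcal{C}$ and $0<r<|I_\emptyset|$, each level-$n$ interval has length at least $\eta^n|I_\emptyset|$ and these intervals are pairwise disjoint, so $B_r(x)$ meets at most $2r/(\eta^n|I_\emptyset|)+2$ of them. Consequently
\[
\mu(B_r(x))\ \le\ \left(\frac{2r}{\eta^n|I_\emptyset|}+2\right)\cdot 2^{-n}.
\]
Choosing $n$ so that $\eta^n\asymp 2r/|I_\emptyset|$ balances the two terms and gives $\mu(B_r(x))\le C\cdot r^s$. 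The mass distribution principle then yields $\dim_H\mathcal{C}\ge s=\ln 2/(K\ln 2.1)$.

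The main obstacle is verifying the symmetric bound $|I_0|/|I_\emptyset|\ge 2.1^{-K}$ (and its counterparts throughout the induction), since Proposition \ref{key-lem} as stated only controls the rightmost sub-interval of $[\theta,\theta^+]$. Adapting its proof is routine — the required ingredients, namely Corollaries \ref{zero-1} and \ref{ratio-change} together with the exponential convergence $P_k\to 2\cos$ of Theorem \ref{exp-cvge}, are symmetric in left and right — but this bookkeeping, together with the careful tracking of which $(1,1)$-regular pair is attached to which endpoint after each subdivision, constitutes the bulk of the technical work.
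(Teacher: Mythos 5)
Your proposal is correct and follows the same overall plan as the paper: induct on the $(1,1)$-regularity of the pairs $(P_{kK-1},P_{kK})$ at the appropriate endpoint of each $I_w$, feed this into Proposition~\ref{key-lem} to control the right-hand ratio $|I_{w1}|/|I_w|$, obtain the matching left-hand bound $|I_{w0}|/|I_w|\ge 2.1^{-K}$, and convert the uniform ratio bound into a Hausdorff-dimension estimate (mass distribution principle versus the paper's citation of \cite{F} is a cosmetic difference).

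The one place where you anticipate more work than is actually needed is the left-hand bound $|I_{w0}|/|I_w|\ge 2.1^{-K}$. You propose a ``left-right variant of the argument of Proposition~\ref{key-lem},'' which would reproduce the two-case analysis in that proof (the $\xi_k$ construction, Lemma~\ref{n-alpha}, Corollaries~\ref{zero-1} and \ref{ratio-change}). None of that is needed for the left sub-interval, because there the germ base point never changes: $I_{w0}$ shares the endpoint $a_w=\alpha_w$ with $I_w$, and $(P_{kK-1},P_{kK})$ is already $(1,1)$-regular at $a_w$. Proposition~\ref{control-coefficient} then gives $(9\alpha^{l-3},2)$-regularity of $(P_{kK+l-1},P_{kK+l})$ at $a_w$, hence $(\delta_1,2)$-regularity for all $l\ge K-4\ge n_\alpha$, and a chain of $K$ applications of Proposition~\ref{ratio} to the smallest zeros $y_{K-4}=b_w, y_{K-3},\dots, y_{2K-4}=b_{w0}$ of $P_{kK+l}$ in $[a_w,\infty)$ gives $\prod_{l=K-4}^{2K-5}(y_{l+1}-a_w)/(y_l-a_w)\ge 2.1^{-K}$ directly. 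The heavy machinery in Proposition~\ref{key-lem} (Lemma~\ref{n-alpha}, the $\delta_3$ bookkeeping) is only there because on the $I_{w1}$ side the germ relocates to the new base point $\beta_w$ and one has to wait roughly $n_\alpha$ iterations before the regularity at $\beta_w$ is strong enough for Proposition~\ref{ratio}; on the $I_{w0}$ side no relocation happens, so no waiting is required. With that simplification your proof is essentially identical to the paper's.
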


\begin{proof} Given $w\in \{0,1\}^{k}$. By induction it is easy to show  that
$$
\{P_{(k+1)K-4}(a_w),P_{(k+1)K-4}(b_w)\}=\{0,2\}.
$$
For definiteness let   $\alpha_w$ be  the endpoint of $I_w$ such that $P_{(k+1)K-4}(\alpha_w)=2$ and $\beta_w$ be another endpoint of $I_w$, thus $P_{(k+1)K-4}(\beta_w)=0.$

\noindent {\bf Claim:}    $(P_{kK-1}, P_{kK})$ is $(1,1)$-regular at $\alpha_w$.

\noindent $\lhd$ We show it by induction on $k$. When $k=0$, $w=\emptyset$. By Lemma \ref{initial-condition}, $(P_{-1},P_0)$ is $(1,1)$ regular at  $\alpha_w=a_w=a_\emptyset.$

Assume the result holds for any $w\in \{0,1\}^l$ with $l<k$. Now fix $w\in \{0,1\}^k$, then $w=\bar{w}i$ with $\bar{w}\in \{0,1\}^{k-1}$ and $i\in \{0,1\}.$ Then $\alpha_w$ must be one of the endpoints of $I_{\bar{w}}$ since $P_{(k+1)K-4}(\alpha_w)=2$. By the induction assumption $(P_{(k-1)K-1},P_{(k-1)K})$ is $(1,1)$-regular at $\alpha_{\bar{w}}$. Consequently $(P_{kK-1},P_{kK})$ is $(9\alpha^{K-3},2)$-regular at $\alpha_{\bar{w}}$. By \eqref{def-K}, $(P_{kK-1},P_{kK})$ is $(1,1)$-regular at $\alpha_{\bar{w}}$. On the other hand by Proposition \ref{key-lem}, we have $(P_{kK-1},P_{kK})$ is $(1,1)$-regular at $\beta_{\bar{w}}$.
Since $\alpha_w=\alpha_{\bar{w}}$ or $\beta_{\bar{w}}$, the result follows.
\hfill $\rhd$

Now fix any $w\in \{0,1\}^{k}$, let us estimate $|I{wi}|/|I_w|$ for $i=0,1$.
Without loss of generality we assume $\alpha_w=a_w$ and $\beta_w=b_w.$
By the claim above $(P_{kK-1},P_{kK})$ is $(1,1)$-regular at $a_w$.
By Proposition \ref{control-coefficient}, $(P_{kK+l-1},P_{kK+l})$ is $(9\alpha^{l-3},2)$-regular for any $l\ge 2$. By \eqref{def-K}, we have $9\alpha^{l-3}<\delta_1$ for any $l\ge K-4$. Thus $(P_{kK+l-1},P_{kK+l})$ is  $(\delta_1,2)$-regular for any $l\ge K-4$. Let $y_l$ be the smallest zero of $P_{kK+l}$ in $[a_w,\infty)$, then $y_{K-4}=b_w$ and $y_{2K-4}=b_{w0}.$ by
Proposition \ref{ratio}, for any $l\ge K-4$
$$
\frac{y_{l+1}-a_w}{y_{l}-a_w}\ge 2.1^{-1}.
$$
Consequently we have
$$
\frac{|I_{w0}|}{|I_w|}=\frac{b_{w0}-a_w}{b_w-a_w}=\prod_{l=K-4}^{2K-5}\frac{y_{l+1}-a_w}{y_{l}-a_w}\ge 2.1^{-K}.
$$
On the other hand, since $(P_{kK-1},P_{kK})$ is $(1,1)$-regular at $a_w$,
 by Proposition \ref{key-lem}, $(P_{(k+1)K-1},P_{kK})$ is $(1,1)$-regular at $b_w$. Moreover since $a_{w1}$ is the  maximal zero of $P_{(k+2)K-4}$ in $I_w$,
 $$
\frac{|I_{w1}|}{|I_w|}=\frac{b_{w}-a_{w1}}{b_w-a_w} \ge 2.1^{-K}.
$$

 Now it is  well known  that (see for example \cite{F})
$$
\dim_H \mathcal C\ge \frac{\ln 2}{-\ln 2.1^{-K}}=\frac{\ln2}{K\ln 2.1}.
$$
\end{proof}

 \begin{proof}[Proof of Theorem \ref{main-bd}]
Recall that  $\Sigma$ is defined  in Remark \ref{rem-zero}, thus
for any $w\in\{0,1\}^*$, $a_w,b_w\in\Sigma\subset \sigma(H_{\lambda,v}).$
Since $\mathcal C=\overline{\{a_w,b_w: w\in \{0,1\}^* \}},$  we conclude that $\mathcal C \subset \sigma(H_{\lambda,v})$.
Consequently
 $$
 \dim_H\sigma(H_{\lambda,v}) \ge \dim_H \mathcal C \ge \frac{\ln 2}{K\ln 2.1}.
 $$
 Since $K$ is an absolute positive constant, the result follows.
 \end{proof}

\noindent
{\bf Acknowledgements}. Liu and Qu are supported by the National Natural Science Foundation of China, No. 11371055.  Qu is supported by the National Natural Science Foundation of China, No. 11201256.


\end{document}